\documentclass[11pt]{article}
\usepackage[margin=1.25in]{geometry}
\usepackage{amsmath,amssymb,amsthm,mathtools}
\usepackage{microtype}
\usepackage{booktabs}
\usepackage{cite}
\usepackage{enumitem}
\usepackage[pagebackref]{hyperref}%
  \hypersetup{colorlinks,citecolor=blue,linkcolor=blue,urlcolor=blue}
\usepackage{xcolor}
\usepackage{tikz}
\usetikzlibrary{calc}
\expandafter\def\expandafter\normalsize\expandafter{%
  \normalsize
  \setlength\abovedisplayskip{7pt}%
  \setlength\belowdisplayskip{7pt}%
  \setlength\abovedisplayshortskip{0pt}%
  \setlength\belowdisplayshortskip{7pt}%
}

\newcommand{\M}{\mathring{M}}
\newcommand{\R}{\mathbb R}
\newcommand{\Q}{\mathbb Q}

\newcommand{\adj}{\operatorname{adj}}
\newcommand{\vol}{\operatorname{vol}}
\newcommand{\area}{\operatorname{area}}
\newcommand{\tr}{\operatorname{tr}}
\newcommand{\mb}[1]{\mathbf{#1}}
\newcommand{\K}{\mathsf{K}}
\newcommand{\E}{\mathsf{E}}
\newcommand{\F}{\mathsf{F}}
\newcommand{\B}{\mathsf{B}}
\newcommand{\T}{\mathsf{T}}
\newcommand{\SL}{\mathrm{SL}}
\newcommand{\SO}{\mathrm{SO}}

\newcommand{\ZArch}{\Z_{\mathrm p,3}}
\newcommand{\Z}{\mathcal{Z}}
\newcommand{\br}[1]{\langle{#1}\rangle}
\newcommand{\rms}{{\mathrm s}}
\newcommand{\rmh}{{\mathrm h}}

\theoremstyle{plain}
\newtheorem{theorem}{Theorem}[section]
\newtheorem{proposition}[theorem]{Proposition}
\newtheorem{lemma}[theorem]{Lemma}
\newtheorem{corollary}[theorem]{Corollary}

\theoremstyle{definition}

\title{The Truncated Octahedral Conjecture}
\author{Thomas Hales and Lark Song}
\date{}
\hypersetup{
    pdftitle={The Truncated Octahedral Conjecture},
    pdfauthor={Thomas Hales and Lark Song}
}

\begin{document}
\maketitle

\begin{abstract}
Among three-dimensional parallelohedra of fixed volume, the
Archimedean truncated octahedron uniquely minimizes surface area,
affirming a conjecture of K.~Bezdek from 2006.  One inequality in the
proof is verified by an exact computer-assisted certificate.
The simplex, frame, and graphical-zonotope inequalities used in the
proof hold in arbitrary dimension.
\end{abstract}

\section{Introduction}

A \emph{parallelohedron} is a convex polyhedron that admits a
face-to-face tiling of Euclidean three-space by translations.  In
1885, Fedorov proved that there are five combinatorially distinct
types: parallelepipeds, hexagonal prisms, rhombic dodecahedra,
elongated dodecahedra, and truncated octahedra \cite{Fedorov1885}.
Their structure was developed further by Minkowski and Venkov
\cite{Minkowski1897,Venkov1954}.  Venkov proved, and McMullen later
proved independently, that the face-to-face hypothesis can be dropped
\cite{Venkov1954,McMullen1980}.  A useful introduction is
\cite{Vallentin}.

For a convex body $\Z\subset\R^n$, $n\ge2$, write its isoperimetric quotient as
\[
 \iota_n(\Z):=\frac{\area(\partial\Z)}{\vol_n(\Z)^{(n-1)/n}},
\]
where $\area$ denotes $(n-1)$-dimensional boundary measure.

The Archimedean truncated octahedron is, up to scale, the Voronoi cell
of the body-centered cubic lattice.  Fix a representative $\ZArch$ of
this similarity class.  The subscript $\mathrm p$ refers to the
symmetric permutohedron, whose $n$-dimensional version
$\Z_{\mathrm p,n}$ is defined in Section~\ref{sec:graphical}.
K. Bezdek conjectured in 2006 that it
minimizes the isoperimetric quotient among parallelohedra
\cite[Conjecture~7.5]{Bezdek2006}; see also
\cite[Conjecture~1]{Langi2022}.  L\'angi proved the analogous result
for mean width \cite{Langi2022}.  Jo\'os and L\'angi obtained the
sharp surface area bound for rhombic dodecahedra \cite{JoosLangi2023}.

Cesaroni and Novaga recently proved that the body-centered cubic
lattice is a strict local minimizer among three-dimensional lattice
Voronoi cells \cite{CesaroniNovaga2026}.  Voronoi conjectured that
every parallelohedron is affinely equivalent to a lattice Voronoi cell
\cite{Voronoi1908}, which is known to hold through dimension five
\cite{Garber2025}.  Since the isoperimetric quotient is not affine
invariant, local minimality does not reduce to lattice Voronoi cells.
Song extended the local minimality from the lattice-Voronoi subclass
to the full parameter space of parallelohedra \cite{Song2026}. Shortly
after our preprint was posted to the arXiv, Cesaroni and Novaga
announced an independent proof of the main result,
Theorem~\ref{thm:global-main}~\cite{CesaroniNovaga-sep2026}.

The Kelvin problem asks for a least-area partition of space into cells of equal
volume.  Kelvin conjectured that a perturbation of the Archimedean
truncated-octahedral tiling is optimal; Weaire and Phelan later found a
counterexample \cite{Kelvin,Weaire}.  Various
revised conjectures add assumptions on the structure of the foam that
make the Kelvin cell a plausible candidate for the optimal
partition~\cite{Sullivan}.  Bezdek's conjecture can be interpreted as
a modification of the original Kelvin problem that restricts the 
equal-volume partitions of space to tilings by parallelohedra.

Although Bezdek's conjecture provides the clearest statement, the
conjecture has historical precedents.  Because of the way it provides
the scaffolding for the Kelvin cell, the Archimedean truncated
octahedron has made recurrent appearances in isoperimetric
investigations.  Lewis supplied polyhedral calculations and
comparisons~\cite{Lewis1925FurtherStudy}; Matzke can arguably be
interpreted as asserting the optimality of the Archimedean truncated
octahedron among space-filling convex
polyhedra~\cite[p.~341]{Matzke1927Analysis}; in 1943 Sharp stated,
under the additional assumption of equal-edge lengths, that a uniform
equal-volume polyhedral division of minimum surface area has cells
that are Archimedean truncated octahedra~\cite{Sharp1943Fundamentals}.
D'Arcy Thompson likewise described the Archimedean truncated
octahedron as the homogeneous space-filler enclosing a given volume
with least surface area \cite[p.~734]{Thompson1942GrowthForm}. In
Kelvin's terminology, following Bravais, a homogeneous division has
congruent translational cells and hence, in the convex polyhedral
case, consists of parallelohedra~\cite{Kelvin1894Homogeneous}.

This article gives a proof of Bezdek's conjecture in partnership with
AI:

\begin{theorem}[truncated octahedral conjecture]\label{thm:global-main}
    Every parallelohedron $\Z\subset\R^3$ satisfies
\begin{equation}\label{eq:global-main}
    \iota_3(\Z)\geq\iota_3(\ZArch) = \frac{3(1+2\sqrt3)}{4^{2/3}}.
\end{equation}
Equality holds if and only if $\Z$ is an Archimedean truncated octahedron.
\end{theorem}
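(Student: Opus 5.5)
Every three-dimensional parallelohedron is a zonotope. By Fedorov's classification, together with Venkov--McMullen, each is a Minkowski sum $\Z=\sum_{i} [-v_i,v_i]$ of at most six segments. The generators come from the edge set of a graph on four vertices: in the Voronoi parametrization, the generic truncated octahedron is a graphical zonotope of $K_4$ with edge weights, and the other four types are degenerations in which some generators vanish or become dependent. I would work with this \emph{graphical-zonotope} parametrization, $v_{ij}$ for the edges $ij$ of $K_4$. Scale invariance lets me normalize the volume. Volume is then a polynomial in the generators: the sum of $|\det(v_a,v_b,v_c)|$ over triples. Surface area is a sum over pairs of parallel face classes, $2\sum_{a<b}|v_a\times v_b|\cdot m_{ab}$, where $m_{ab}$ counts the faces spanned by $a,b$. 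Since each class of lower type is a limit of truncated octahedra and $\iota_3$ is continuous, it suffices to prove \eqref{eq:global-main} on the closure of the truncated-octahedron family. Uniqueness is then handled separately by showing that degenerate configurations are strictly worse.

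The main inequality I would split into two pieces, matching the "simplex, frame, and graphical-zonotope" inequalities announced in the abstract.

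\textbf{Graphical-zonotope inequality.} First compare $\area$ and $\vol$ of the $K_4$ zonotope with quantities attached to an auxiliary tetrahedron (the simplex) whose edge data are the $v_{ij}$. Concretely, $\vol(\Z)$ is a sum of determinants indexed by spanning trees of $K_4$. I would bound it above by a symmetric function of the Gram data. Each area term $|v_a\times v_b|$ I would bound below by a term on the same Gram data.

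\textbf{Simplex inequality.} Next, an isoperimetric-type inequality for the simplex, or for the frame of edge vectors. It should say that among weighted configurations of six edge vectors of $K_4$ with the relevant mixed volume fixed, the area functional is minimized at the regular (symmetric permutohedron) configuration. This is the case $\Z_{\mathrm p,3}$.

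I would prove both pieces by convexity and symmetrization. Averaging over the symmetry group $S_4$ of $K_4$, combined with the AM--GM inequality on the determinant terms, pushes toward equal weights. Cauchy--Schwarz on $\sum|v_a\times v_b|$ against the Gram matrix pushes the frame toward the regular one. At the regular configuration a direct evaluation gives $3(1+2\sqrt3)/4^{2/3}$, which confirms that the constant in \eqref{eq:global-main} is attained.

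The main obstacle is that neither pure symmetrization step is sharp. Area and volume are coupled nonlinearly through both the weights and the angles of the frame, so a genuinely two-variable (shape-and-scale) inequality remains after reduction. I expect this residual inequality to be a polynomial inequality in a few Gram parameters after squaring out the norms. I would attack it first by Lagrange multipliers to locate critical points. I would then certify global positivity by an exact sum-of-squares or interval certificate; this is presumably the computer-assisted step mentioned in the abstract.

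For the equality case, I would trace strictness through each step. Strict AM--GM forces equal weights. Strict Cauchy--Schwarz forces the regular frame. Degenerations, meaning a vanishing generator, give a strictly larger quotient because the certificate is strictly positive off the symmetric point. Equality therefore forces $\Z$ to be similar to $\ZArch$.
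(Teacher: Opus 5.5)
\textbf{There is a genuine gap: the truncated-octahedral inequality itself is never proved, and the equality case on degenerate types rests on an unsupported claim.}

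Your reduction of the inequality to the six-zone family is sound. Parallelepipeds, hexagonal prisms and both dodecahedral types arise from $\Z(\beta,M)$ by sending some zone weights to zero, and $\iota_3$ is continuous on full-dimensional convex bodies. The problem is that the inequality on that family is not proved. Your ``simplex inequality'' for the six-zone family is essentially the truncated-octahedral case of the theorem restated, and the tools you name do not reach it. There is no monotonicity of $\iota_3$ under $S_4$-averaging. Minkowski averaging of congruent copies raises volume, but since $\area^{1/2}$ is Minkowski-concave it raises surface area as well. Relabelling the weights also does nothing to the affine deformation $M\in\SL(3,\R)$.

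What is missing is a mechanism that removes $M$ and the square roots $|M^{-T}\mb n_b|$. The paper supplies it in two steps:
\begin{enumerate}
\item It compares $\Z(\beta,M)$ with the canonical representative $\Z(\beta,\M_\beta)$, where $\M_\beta=d^{1/6}B_\beta^{-1/2}$. Using $|R\mb v|\ge\mb v^TR\mb v$ for unit $\mb v$ and trace AM--GM, it gets $\area\ge6(\det H_\beta)^{1/3}$ for every $M$.
\item A single Jensen inequality for $x\mapsto x^{-1/2}$, calibrated by the normal lengths, then gives the $M$-free bound $6d^{2/3}(\varphi^3/\psi)^{1/6}$.
\end{enumerate}
Only after these steps is a certificate feasible: the degree-$18$ inequality $\varphi^3\ge\kappa^2\psi$ in five normalized variables. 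Your residual inequality is left unspecified. Modulo similarity the family has ten parameters (five weight ratios and five for $\SL(3,\R)/\SO(3)$), and the area is a sum of seven square roots. So ``squaring out the norms'' followed by Lagrange multipliers is a program, not a proof.

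The equality case has a second gap. Continuity gives only $\ge$ on the degenerate Fedorov types, so strictness there must be proved separately. Your justification, that the certificate is strictly positive off the symmetric point, is not available:
\begin{itemize}
\item A nonnegativity certificate yields no strictness.
\item For the paper's $f=\varphi^3-\kappa^2\psi$ the claim is false as stated, since $f$ vanishes at lower-dimensional configurations such as $\beta=\mb e_{34}$.
\item At the rhombic point $\beta_{12}=\beta_{34}=0$, other weights $1$, one finds $\varphi^3/\psi\approx0.4849$ against $\kappa^2\approx0.4830$. The margin on boundary faces is thin, so any strictness there would need its own argument.
\end{itemize}

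The paper never needs such a claim. For positive weights, uniqueness comes from the equality cases of the analytic steps. Equality in Jensen forces all $\hat r_b$ equal, hence $B_\beta$ scalar and all weights equal. Symmetry then makes $H_\beta$ scalar, and equality in trace AM--GM forces $M\in\SO(3)$.

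The four other types get separate sharp bounds strictly above $\iota_3(\ZArch)\approx5.315$. Steiner's prism inequality gives $6$ for parallelepipeds and $3\cdot2^{2/3}3^{1/6}$ for hexagonal prisms. The balanced-frame form of the simplex inequality gives $3\cdot2^{5/6}$ for rhombic and elongated dodecahedra. That is where the simplex and frame inequalities enter, not in the truncated-octahedral case as your outline suggests.
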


\subsection*{Outline of the proof}

Fedorov's classification reduces the main theorem to five
combinatorial types.  The parallelepiped and hexagonal-prism cases
follow from the classical sharp inequality for polygonal prisms.
The remaining arguments have a common higher-dimensional formulation
in terms of cofactors and spanning trees.

Section~\ref{sec:simplex} proves a weighted
simplex inequality and its reciprocal balanced-frame form.  Applied
to the adjugate of a zone matrix, the latter gives the sharp bound
for the $(n+1)$-generator rhombic class and an elongated family with
one additional subset-sum generator.  Their three-dimensional
specializations exceed the Archimedean truncated-octahedral value.

For the weighted $\K_{n+1}$-graphical family, the zone matrix
$B_\beta$ has determinant equal to the volume.  Its adjugate
records the facet coefficients, indexed by cuts of the graph.
The canonical affine map $d^{1/(2n)}B_\beta^{-1/2}$, where
$d=\det B_\beta$, determines a surface-area tensor $H_\beta$.
A trace estimate removes the arbitrary special-linear deformation,
and one calibrated Jensen inequality gives
\[
 \area(\partial\Z(\beta,M))
 \ge2n(\det H_\beta)^{1/n}
 \ge2n d^{(n-1)/n}
               \left(\frac{\varphi^3}{\psi}\right)^{1/(2n)}.
\]
Here $\varphi$ and $\psi$ are explicit polynomial scalars.
Equality in Jensen forces all normalized quadratic values on the
cut normals to agree, making $B_\beta$ scalar and all zone weights
equal.  The affine equality condition then forces $M$ to be orthogonal.

Section~\ref{sec:three-dimensional} specializes these results to
the five Fedorov types.  For the truncated-octahedral type the one
remaining input is $f_3=\varphi^3-\kappa_3^2\psi\ge0$.
This homogeneous degree-$18$ inequality in six variables is proved
in the appendix by exact Bernstein coefficients in $\Q(\sqrt3)$
on a finite cover.  Nonnegativity alone suffices, since geometric
uniqueness has already been established by the analytic argument.

\subsection*{Notation}

Bold lowercase Roman letters denote vectors in $\R^n$; the main
letters are $\mb u,\mb v,\mb w,\mb n$ for balanced frames, simplex
vertices, reciprocal edges, and normals.  We use $\mb c_I$ for a
cofactor vector indexed by a set of generating vectors, reserving
$\mb w_{ij}$ for a reciprocal edge indexed by its two endpoints.
Uppercase Roman letters denote matrices, except that $I,J,S$ are
also used for finite index sets.  We write $I_m$ for the $m\times m$
identity matrix, $\mb e_i$ for the $i$th standard basis vector, and
$\mb1$ for the column vector all of whose entries are one; the
ambient dimension is specified by the context.  Thus $\mb1^T$ is
the corresponding row vector.  The Kronecker delta $\delta_{ij}$ is
one when $i=j$ and zero otherwise; it is distinct from the positive
determinant scalar $\delta_{\mb u}$ used for a balanced frame.
We write $\adj M$ for the adjugate.  The symbol $\Z$ denotes
a zonotope or polyhedron.  Uppercase sans serif letters denote
graphs and combinatorial collections.  Discrete indices include
$i,j,k,\ell,b,e$, and the dimension is $n$.  Lowercase Roman and
Greek letters denote real scalars and functions.  Superscripts $T$
and $c$ denote transpose and set complement.  Hats indicate the
specified rescaling, not necessarily unit length; a circle indicates
canonical affine position.  We use $|\cdot|$ for absolute value,
Euclidean norm, and cardinality.  A \emph{zone} is a direction of
generating segments.  In dimension three, subscripts \emph{h} and
\emph{s} refer to hexagonal and square facets.

\section{Cofactors, simplices, and balanced frames}
\label{sec:simplex}

Throughout Sections~\ref{sec:simplex}--\ref{sec:averaging}, the dimension
is $n\ge2$.  Let $\K_{n+1}$ be the complete graph on vertex set $[n+1]=\{1,\ldots,n+1\}$,
with edge set $\E=\binom{[n+1]}2$, and spanning trees $\T$.  For edge
weights $\rho_e\ge0$, write
\begin{equation}\label{eq:tree-polynomial}
 \tau(\rho):=\sum_{J\in\T}\prod_{e\in J}\rho_e,
 \qquad c_n:=\frac{n}{\sqrt2}(n+1)^{1/(2n)}.
\end{equation}

\subsection{Determinant and zonotope formulas}

For vectors $\mb v_1,\ldots,\mb v_m\in\R^n$ and
$I=\{i_1<\cdots<i_{n-1}\}\subset[m]$, define the cofactor vector
$\mb c_I$ by
\begin{equation}\label{eq:cofactor-definition}
 \mb c_I\cdot\mb x
 =\det(\mb v_{i_1},\ldots,\mb v_{i_{n-1}},\mb x)
 \qquad(\mb x\in\R^n).
\end{equation}
Its coordinates are signed minors.  In dimension three it is
$\mb v_{i_1}\times\mb v_{i_2}$.  Reordering $I$ changes only its sign.
If $a_i\ge0$ and $Y$ has columns $\sqrt{a_i}\mb v_i$, Cauchy--Binet gives
\begin{equation}\label{eqn:cauchy-binet}
 \det(YY^T)
 =\sum_{|I|=n}\left(\prod_{i\in I}a_i\right)
                  \det(\mb v_i:i\in I)^2.
\end{equation}
The corresponding identity for the adjugate is equally direct.

\begin{lemma}\label{lemma:adj}
With the notation above,
\begin{equation}\label{eq:adj}
 \adj(YY^T)=\sum_{|J|=n-1}
             \left(\prod_{j\in J}a_j\right)\mb c_J\mb c_J^T.
\end{equation}
\end{lemma}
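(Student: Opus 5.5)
We prove \eqref{eq:adj} by testing both sides against an arbitrary pair of vectors and applying Cauchy--Binet to a bordered matrix, in the same spirit as \eqref{eqn:cauchy-binet}. Since $YY^T$ is symmetric, its adjugate is symmetric. The right side of \eqref{eq:adj} is symmetric as well. Moreover, $\mb c_J\mb c_J^T$ does not change when $J$ is reordered, because $\mb c_J$ only changes sign. It therefore suffices to show that both sides give the same value of $\mb x^T(\cdot)\mb x$ for every $\mb x\in\R^n$; polarization then identifies the symmetric matrices.

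\emph{Step 1: the left side.} For any $n\times n$ matrix $A$ we have the bordering identity
\[
 \det\begin{pmatrix}A&\mb x\\ \mb x^T&0\end{pmatrix}=-\mb x^T(\adj A)\mb x.
\]
This follows by expanding along the last row and the last column; it is the standard Schur-complement/cofactor formula and needs no invertibility of $A$. Apply it with $A=YY^T$.

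\emph{Step 2: factor the bordered matrix.} Replace $\mb x$ by $\mb x\epsilon$ for a formal scalar parameter, or more cleanly argue as follows. Define
\[
 \Phi(\mb x):=\det\bigl(YY^T+\mb x\mb x^T\bigr)-\det(YY^T).
\]
By the matrix determinant lemma, $\Phi(\mb x)=\mb x^T\adj(YY^T)\mb x$; this polynomial identity holds for all $A$ by density of invertible matrices. Now $YY^T+\mb x\mb x^T=\tilde Y\tilde Y^T$, where $\tilde Y=(Y\mid\mb x)$ has $m+1$ columns. Apply \eqref{eqn:cauchy-binet} to $\tilde Y$, giving the extra column weight $1$ and vector $\mb x$. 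The $n$-subsets that avoid the new column reproduce $\det(YY^T)$. The $n$-subsets that contain it have the form $J\cup\{m+1\}$ with $|J|=n-1$, and each contributes
\[
 \Bigl(\prod_{j\in J}a_j\Bigr)\det(\mb v_j:j\in J,\ \mb x)^2
 =\Bigl(\prod_{j\in J}a_j\Bigr)(\mb c_J\cdot\mb x)^2
\]
by the definition \eqref{eq:cofactor-definition}. Summing over $J$ gives $\Phi(\mb x)=\mb x^T\bigl(\sum_J(\prod_{j\in J}a_j)\mb c_J\mb c_J^T\bigr)\mb x$, which is \eqref{eq:adj}. Step 1 is then only needed as an alternative route.

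\emph{Main obstacle.} The argument has two points that require care. The first is justifying $\det(A+\mb x\mb x^T)=\det A+\mb x^T(\adj A)\mb x$ for singular $A$. Both sides are polynomial in the entries of $A$ and agree on invertible $A$, so the identity extends by continuity. The second is the sign and ordering convention in $\mb c_J$, which is harmless because only $\mb c_J\mb c_J^T$ appears.
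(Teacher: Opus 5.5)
Your proof is correct, but it takes a different route from the paper.

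\textbf{The paper's route.} The paper works with the matrix $C$ whose columns are the $\mb c_J$ and verifies the defining relation of the adjugate directly. It expands $YY^TCC^T=\sum_{j,J}\mb v_j(\mb v_j\cdot\mb c_J)\mb c_J^T$ and notes that $\mb v_j\cdot\mb c_J$ vanishes unless $J\cup\{j\}$ is an $n$-set $I$. It then regroups the surviving terms as $\sum_I d_I\,Y_I\adj Y_I=\sum_I d_I^2I_n=\det(YY^T)I_n$. Finally it inverts $YY^T$ in the full-rank case and extends by density, padding with zero columns when $m<n$.

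\textbf{Your route.} You reduce the matrix identity to a scalar one. The matrix determinant lemma identifies $\mb x^T\adj(YY^T)\mb x$ with $\det\bigl((Y\mid\mb x)(Y\mid\mb x)^T\bigr)-\det(YY^T)$. Then \eqref{eqn:cauchy-binet}, applied to the augmented configuration, isolates exactly the $n$-sets $J\cup\{m+1\}$. By \eqref{eq:cofactor-definition}, each contributes $\bigl(\prod_{j\in J}a_j\bigr)(\mb c_J\cdot\mb x)^2$. Symmetry of both sides and polarization finish the argument.

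\textbf{What each buys.} Your route avoids the sign bookkeeping $\sigma_{j,J}$ and the separate full-rank and padding steps. Cauchy--Binet with empty sums already covers $m<n$ and rank-deficient $Y$. The only remaining limiting argument is the matrix determinant lemma for singular $A$, and you could prove even that by column multilinearity instead of density. Your argument also makes literal the paper's remark that the adjugate identity is ``equally direct'' from Cauchy--Binet. In exchange, the paper's route establishes the product relation $YY^T\cdot CC^T=\det(YY^T)I_n$ entrywise, without appealing to symmetry or polarization.

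Your Step~1 (the bordered determinant) and the aside about a formal parameter are never used and can simply be deleted.
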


\begin{proof}
Absorb the constants $\sqrt{a_i}$ into $\mb v_i$ and $\mb c_I$, and
assume without loss of generality that $a_i=1$ for all $i$.  The right
side of \eqref{eq:adj} then has the form $CC^T$, where $C$ has columns
$\mb c_I$.  Abbreviate $d_I=\det(\mb{v}_i:i\in I)$, and let $Y_I$ be
the submatrix of $Y$ with columns $I$.  We have for appropriate
signs $\sigma_{j,J}=\pm1$, using the cofactor formula for the adjugate
of $Y_I$:
\begin{align*}
 YY^TCC^T &= \sum_{j,J} \mb v_j (\mb v_j\cdot \mb c_J)\mb c_J^T
 =\sum_{|I|=n} d_I\sum_{J\cup \{j\}=I}\sigma_{j,J} \mb v_j\mb c_J^T\\
 &=\sum_{|I|=n}d_IY_I\adj Y_I
 =\sum_{|I|=n}d_I^2I_n
 =\det(YY^T)I_n.
\end{align*}
The last equality is Cauchy--Binet.
If $Y$ has rank $n$, this identity proves
\eqref{eq:adj}.  For the remaining cases, first pad $Y$ with zero
columns if $m<n$.  Both sides are polynomial in the entries of $Y$,
and full-rank matrices are dense, so the same identity holds without
the rank assumption.
\end{proof}

For the full-dimensional zonotope
$\Z=\sum_{i=1}^m[-a_i\mb v_i/2,a_i\mb v_i/2]$, the standard
volume and surface-area formulas are
\begin{align}
 \vol_n(\Z)
 &=\sum_{|I|=n}\left(\prod_{i\in I}a_i\right)
                    |\det(\mb v_i:i\in I)|,
                    \label{eq:zonotope-volume}\\
 \frac12\area(\partial\Z)
 &=\sum_{|I|=n-1}\left(\prod_{k\in I}a_k\right)|\mb c_I|.
                    \label{eq:zonotope-area}
\end{align}
See \cite{Shephard1974} and \cite[Corollary~1]{JoosLangi2023}.
Thus the same cofactors occur in the adjugate and in surface area.
Under an invertible linear map $M$, they transform by
$\mb c_I\mapsto\det(M)M^{-T}\mb c_I$.

\subsection{A weighted simplex inequality}

\begin{theorem}[weighted simplex inequality]\label{thm:weighted-simplex}
Let $\mb v_1,\ldots,\mb v_{n+1}\in\R^n$ be affinely independent, and put
\[
 \ell_{ij}:=|\mb v_i-\mb v_j|,
 \qquad
 \delta:=|\det(\mb v_2-\mb v_1,\ldots,\mb v_{n+1}-\mb v_1)|.
\]
For nonnegative edge weights $\rho_e$ with $\tau(\rho)>0$,
\begin{equation}\label{eq:weighted-simplex}
 \sum_{e\in\E}\rho_e\ell_e
 \ge c_n\bigl(\delta\,\tau(\rho)\bigr)^{1/n}.
\end{equation}
Equality holds if and only if the simplex is regular and all the
edge weights are equal.
\end{theorem}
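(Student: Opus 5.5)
The plan is to realise $\delta^2\tau(\rho)$ as a Gram determinant through Cauchy--Binet \eqref{eqn:cauchy-binet}, and then to close the gap between $\sum\rho_e\ell_e$ and that determinant with three standard inequalities: AM--GM on eigenvalues, Cauchy--Schwarz over spanning trees, and a Maclaurin-type bound for the tree polynomial.

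\textbf{Step 1 (Gram determinant).} For each edge $e=\{i,j\}\in\E$ put $\mb x_e:=\mb v_i-\mb v_j$. Any $n$ of these vectors are linearly independent exactly when the corresponding edges form a spanning tree $J\in\T$. In that case $|\det(\mb x_e:e\in J)|=\delta$, since the tree edge vectors are obtained from $\mb v_2-\mb v_1,\ldots,\mb v_{n+1}-\mb v_1$ by a unimodular change of basis. Let $Y$ have columns $\sqrt{a_e}\,\mb x_e$. Then \eqref{eqn:cauchy-binet} gives
\[
 \det(YY^T)=\delta^2\,\tau(a),\qquad \tr(YY^T)=\sum_e a_e\ell_e^2 .
\]

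\textbf{Step 2 (three inequalities).} Take $a_e=\rho_e/\ell_e$ and write $A:=\sum_e\rho_e\ell_e$. Then $\tr(YY^T)=A$, and AM--GM on the eigenvalues of $YY^T$ gives
\begin{equation*}
 \delta^2\,\tau(\rho/\ell)\le (A/n)^n .
\end{equation*}
Next, Cauchy--Schwarz over trees, with $\prod_{e\in J}\rho_e=\prod_{e\in J}(\rho_e/\ell_e)^{1/2}(\rho_e\ell_e)^{1/2}$, gives
\begin{equation*}
 \tau(\rho)^2\le\tau(\rho/\ell)\,\tau(\rho\ell).
\end{equation*}
Finally, set $\sigma_e=\rho_e\ell_e$, so that $\sum_e\sigma_e=A$, and use the Maclaurin-type bound
\begin{equation*}
 \tau(\sigma)\le\tau(\mb1)\,\bar\sigma^{\,n},\qquad \bar\sigma=\frac{A}{|\E|}=\frac{2A}{n(n+1)},\qquad \tau(\mb1)=(n+1)^{n-1}
\end{equation*}
(Cayley). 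Multiplying the three bounds yields $\delta^2\tau(\rho)^2\le A^{2n}\,2^n/\bigl(n^{2n}(n+1)\bigr)$. This rearranges to exactly $A\ge \frac{n}{\sqrt2}(n+1)^{1/(2n)}(\delta\tau(\rho))^{1/n}=c_n(\delta\tau(\rho))^{1/n}$, which is \eqref{eq:weighted-simplex}.

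\textbf{Main obstacle: the Maclaurin bound.} I would prove it in two parts.
\begin{itemize}[leftmargin=*]
\item The tree polynomial $\tau$ is real stable. It is the determinant of a reduced weighted Laplacian, which is a sum of positive semidefinite rank-one matrices $\sigma_e\mb x_e\mb x_e^T$ restricted to a hyperplane. Hence $\tau$ is hyperbolic with respect to $\mb1$, and by G\aa rding's theorem $\tau^{1/n}$ is concave on the positive orthant.
\item The symmetric group on $[n+1]$ acts transitively on $\E$ and preserves $\tau$. Averaging $\sigma$ over this action produces the constant vector $\bar\sigma\mb1$, and concavity of $\tau^{1/n}$ then gives $\tau(\sigma)^{1/n}\le\tau(\bar\sigma\mb1)^{1/n}$.
\end{itemize}
As a fallback avoiding G\aa rding, one can instead use concavity of $\log\det$ applied to the reduced Laplacian.

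\textbf{Equality.} Equality forces each of the three steps to be tight.
\begin{itemize}[leftmargin=*]
\item AM--GM is tight only if $YY^T$ is scalar.
\item Maclaurin is tight only if all $\rho_e\ell_e$ are equal. If $\tau^{1/n}$ is strictly concave transversally to $\mb1$ this is immediate; otherwise I would extract it directly from the Laplacian.
\item Cauchy--Schwarz is tight only if $\prod_{e\in J}\ell_e$ is the same for every tree $J$ with $\prod_{e\in J}\rho_e>0$.
\end{itemize}
From equal $\rho_e\ell_e$ I would first show that the support of $\rho$ is all of $\E$, since all $\sigma_e$ are equal and $\tau>0$. The tree-product condition then compares trees differing in one edge, which forces all $\ell_e$ to be equal, so the simplex is regular. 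Equal $\rho_e\ell_e$ with equal $\ell_e$ then gives equal $\rho_e$. Conversely, for a regular simplex with equal weights, $\sum_e\mb x_e\mb x_e^T$ is scalar by symmetry, so every step is an equality.
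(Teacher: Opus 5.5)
Your proof is essentially the paper's. Your $YY^T$ with $a_e=\rho_e/\ell_e$ is $\sum_e\rho_e\ell_e$ times the paper's matrix $G$, and the paper chains the same three estimates: trace AM--GM, Cauchy--Schwarz over spanning trees, and the tree bound $\tau(\sigma)\le(n+1)^{-1}\bigl(2\sum_e\sigma_e/n\bigr)^n$.

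The one difference is how the tree bound is proved. The paper uses the eigenvalue form of the matrix-tree theorem, $\tau(\sigma)=(n+1)^{-1}\lambda_1\cdots\lambda_n$ with $\sum_i\lambda_i=\tr L=2\sum_e\sigma_e$, followed by scalar AM--GM. This is shorter than your concavity-plus-symmetrization argument, and it also settles the equality step you left conditional: equal eigenvalues force $L$ to be a multiple of $I_{n+1}-\frac1{n+1}\mb1\mb1^T$, whose off-diagonal entries make all $\sigma_e$ equal. If you keep your route, the equality case of Minkowski's determinant inequality, applied to the positive definite reduced Laplacians of the permuted weights, does the same job.
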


\begin{proof}
Put $s:=\sum_e\rho_e\ell_e$, and set
\[
 \hat\rho_e:=\frac{\rho_e\ell_e}{s},\qquad
 \widehat{\mb v}_{ij}:=\frac{\mb v_i-\mb v_j}{\ell_{ij}},\qquad
 G:=\sum_e\hat\rho_e\widehat{\mb v}_e\widehat{\mb v}_e^T.
\]
The hypothesis $\tau(\rho)>0$ means that the positive-weight edges
contain a spanning tree.  In particular $s>0$,
$\sum_e\hat\rho_e=1$, and the vectors with positive coefficients
in $G$ span $\R^n$.  Hence $G$ is positive definite, and
$\tr G=\sum_e\hat\rho_e|\widehat{\mb v}_e|^2=1$.

A set of $n$ simplex edges is independent precisely when it is a
spanning tree.  A cycle gives a linear dependence; an acyclic graph
on $n+1$ vertices with $n$ edges is a tree.  Conversely, successively
deleting leaves, or performing the corresponding elementary column
operations, shows that the determinant of a tree's edge vectors has
the same absolute value $\delta$ as that of the star at vertex $1$.
Cauchy--Binet therefore expands the determinant over exactly these
trees.  Applying trace AM--GM next gives
\begin{equation}\label{eq:simplex-geometric-bound}
 \sum_{J\in\T}\left(\prod_{e\in J}\hat\rho_e\right)
       \left(\frac{\delta}{\prod_{e\in J}\ell_e}\right)^2
 =\det G\\
 \le\left(\frac{\tr G}{n}\right)^n=n^{-n}.
\end{equation}
Equality in this determinant bound holds precisely when $G=I_n/n$.

Let $L$ be the weighted Laplacian on $[n+1]$ with weights
$\hat\rho_e$:
\[
 L_{ij}:=\begin{cases}
 -\hat\rho_{ij},&i\ne j,\\
 \displaystyle\sum_{k\ne i}\hat\rho_{ik},&i=j.
 \end{cases}
\]
Each row sums to zero, so $L\mb1=0$.  Moreover, for
$\mb z=(z_1,\ldots,z_{n+1})^T$,
\[
 \mb z^TL\mb z
 =\sum_{i<j}\hat\rho_{ij}(z_i-z_j)^2.
\]
Since the positive-weight edges connect all vertices, this expression
vanishes exactly when $\mb z$ is constant.  Thus $\ker L=\R\mb1$:
one eigenvalue is zero and the remaining $n$ eigenvalues
$\lambda_1,\ldots,\lambda_n$ are positive.  Their sum is
$\tr L=2\sum_e\hat\rho_e=2$.  The eigenvalue form of the
weighted matrix-tree theorem gives
$\tau(\hat\rho)=(n+1)^{-1}\lambda_1\cdots\lambda_n$
\cite{klee2019linear,DuvalKlivansMartin2009}.  Scalar AM--GM yields
\begin{equation}\label{eq:simplex-tree-bound}
 \tau(\hat\rho)
 =\frac1{n+1}\prod_{i=1}^n\lambda_i
 \le\frac1{n+1}
       \left(\frac{\lambda_1+\cdots+\lambda_n}{n}\right)^n
 =\frac1{n+1}\left(\frac2n\right)^n.
\end{equation}
Cauchy--Schwarz, followed by the two determinant bounds, now gives
\begin{align*}
 \frac{\delta\,\tau(\rho)}{s^n}
 =\sum_{J\in\T}\left(\prod_{e\in J}\hat\rho_e\right)
                 \frac{\delta}{\prod_{e\in J}\ell_e}
 \le\sqrt{\tau(\hat\rho)\det G}
 \le\frac{2^{n/2}}{\sqrt{n+1}\,n^n}.
\end{align*}
Rearranging proves \eqref{eq:weighted-simplex}.

Suppose equality holds.  Equality in
\eqref{eq:simplex-tree-bound} forces equal eigenvalues $2/n$.
Since $L$ is symmetric, it acts
as zero on $\R\mb1$ and as multiplication by $2/n$ on its orthogonal
complement $\mb1^\perp$.  The orthogonal projection onto that
complement is $I_{n+1}-(n+1)^{-1}\mb1\mb1^T$.  Therefore
\[
 L=\frac2n\left(I_{n+1}-\frac1{n+1}\mb1\mb1^T\right),
 \qquad \hat\rho_e=\frac2{n(n+1)}\quad(e\in\E),
\]
where the second identity follows by comparing off-diagonal
entries.  In particular, every tree has positive weight.
Equality in Cauchy--Schwarz then says that
$\delta/\prod_{e\in J}\ell_e$, and hence
$\prod_{e\in J}\ell_e$, is independent of $J\in\T$.
For distinct $i,j,k$, compare the tree with edges $ij,jk$ and every
remaining vertex attached to $k$ with the tree obtained by replacing
$ij$ by $ik$.  Cancelling the common positive factors gives
$\ell_{ij}=\ell_{ik}$.  Varying the three vertices shows that all
edge lengths are equal.  Since $\hat\rho_e=\rho_e\ell_e/s$,
all the original weights $\rho_e$ are equal as well.

Conversely, suppose the simplex is regular, with common edge length
$\ell$, and the weights are equal.  To verify the geometric equality
explicitly, translate the centroid to zero and let
$Q=(\mb q_1,\ldots,\mb q_{n+1})$ be the centered vertex matrix.
The equal edge lengths and $\sum_i\mb q_i=0$ give
\[
 Q^TQ=\frac{\ell^2}{2}
       \left(I_{n+1}-\frac1{n+1}\mb1\mb1^T\right),
 \qquad QQ^T=\frac{\ell^2}{2}I_n.
\]
The second identity follows because $Q$ has rank $n$ and $QQ^T$
has the same nonzero eigenvalues as $Q^TQ$.  Using the balance
identity once more,
\[
 \sum_{i<j}(\mb q_i-\mb q_j)(\mb q_i-\mb q_j)^T
 =(n+1)QQ^T=\frac{(n+1)\ell^2}{2}I_n.
\]
Thus $\hat\rho_e=2/(n(n+1))$ indeed gives $G=I_n/n$.
The Laplacian has the required equal positive eigenvalues, and all
tree edge-length products are equal.  Every inequality above is
therefore an equality.
\end{proof}

There is an exact isoperimetric interpretation.  The sum
$\sum_e\rho_e\ell_e$ is the first intrinsic volume of
$\Z_\rho=\sum_{i<j}[0,\rho_{ij}(\mb v_i-\mb v_j)]$, and
$\vol_n(\Z_\rho)=\delta\tau(\rho)$.
The first intrinsic volume is proportional to mean width
\cite[Corollary~1 and Remark~1]{JoosLangi2023}.
Thus the inequality is a sharp mean-width--volume inequality for
weighted simplex-edge zonotopes.  In dimension three it also follows
from L\'angi's theorem \cite{Langi2022}.

\subsection{Balanced frames and reciprocal edges}

A \emph{balanced $(n+1)$-frame} is a tuple
$(\mb u_i)_{i=1}^{n+1}$ in $\R^n$ whose sum is zero and for which any
$n$ vectors are independent.  Its maximal determinants have a common
absolute value
$\delta_{\mb u}:=|\det(\mb u_1,\ldots,\mb u_n)|>0$.
The frame is \emph{regular} if, for some $a>0$,
\begin{equation}\label{eq:regular-frame}
 \mb u_i\cdot\mb u_j=a\bigl((n+1)\delta_{ij}-1\bigr).
\end{equation}
For $i\ne j$, define the reciprocal edge vector $\mb w_{ij}$ by
\begin{equation}\label{eq:reciprocal-pairing}
 \mb u_k\cdot\mb w_{ij}
 =\delta_{\mb u}(\delta_{ki}-\delta_{kj})
 \qquad(k\in[n+1]).
\end{equation}
The balance relation makes these equations consistent: the sum of
the right sides is zero, just as the sum of the left sides is zero.
Any $n$ of the equations determine $\mb w_{ij}$ uniquely, and the
remaining equation follows from their sum.  In particular,
$\mb w_{ji}=-\mb w_{ij}$.
In terms of the earlier cofactor convention, 
$\mb w_{ij}=\pm\mb c_{[n+1]\setminus\{i,j\}}$.
The sign is fixed by \eqref{eq:reciprocal-pairing}, but
it will be
immaterial in formulas below.

\begin{corollary}[balanced-frame inequality]\label{cor:balanced-frame}
For a balanced frame and nonnegative weights $\rho_{ij}$, put
\[
 A:=\sum_{i<j}\rho_{ij}\mb w_{ij}\mb w_{ij}^T.
\]
If $A$ is positive definite, then
\begin{equation}\label{eq:balanced-frame}
 \sum_{i<j}\rho_{ij}|\mb w_{ij}|
 \ge c_n\left(\frac{\det A}{\delta_{\mb u}^{\,n-1}}\right)^{1/n}.
\end{equation}
Equality holds if and only if the frame is regular and all
$\rho_{ij}$ are equal.
\end{corollary}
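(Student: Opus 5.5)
The plan is to realize the reciprocal edges $\mb w_{ij}$ as the edge vectors of an actual simplex. Then the balanced-frame inequality is exactly Theorem~\ref{thm:weighted-simplex} applied to that simplex, after identifying $\det A$ with $\delta\,\tau(\rho)$ via Cauchy--Binet.

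\emph{Step 1: the dual simplex.} For each $i\in[n+1]$ I define $\mb v_i\in\R^n$ by the equations
\[
 \mb u_k\cdot\mb v_i=\delta_{\mb u}\Bigl(\delta_{ki}-\frac1{n+1}\Bigr)
 \qquad(k\in[n+1]).
\]
These equations are consistent for the same reason as \eqref{eq:reciprocal-pairing}: the right sides sum to zero over $k$. They determine $\mb v_i$ uniquely because any $n$ of the $\mb u_k$ are independent. Subtracting the equations for $\mb v_i$ and $\mb v_j$ and comparing with \eqref{eq:reciprocal-pairing} gives $\mb v_i-\mb v_j=\mb w_{ij}$. Hence $|\mb w_{ij}|=\ell_{ij}$.

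Next I compute $\delta$. Let $U=(\mb u_1,\ldots,\mb u_n)$ and $W=(\mb v_1-\mb v_{n+1},\ldots,\mb v_n-\mb v_{n+1})$. The defining equations give $U^TW=\delta_{\mb u}I_n$. Therefore $|\det W|=\delta_{\mb u}^n/|\det U|=\delta_{\mb u}^{\,n-1}$. In particular the $\mb v_i$ are affinely independent and $\delta=\delta_{\mb u}^{\,n-1}$.

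\emph{Step 2: $\det A=\delta^2\tau(\rho)$.} I apply Cauchy--Binet \eqref{eqn:cauchy-binet} with the columns $\sqrt{\rho_e}\,\mb w_e$. As in the proof of Theorem~\ref{thm:weighted-simplex}, a set of $n$ simplex edges has nonzero determinant exactly when it is a spanning tree, and then the determinant has absolute value $\delta$. This gives
\[
\det A=\delta^2\tau(\rho),\qquad\text{so}\qquad \delta\,\tau(\rho)=\det A/\delta_{\mb u}^{\,n-1}.
\]
Positive definiteness of $A$ says that the positive-weight edges span $\R^n$, which is equivalent to $\tau(\rho)>0$. So Theorem~\ref{thm:weighted-simplex} applies, and \eqref{eq:weighted-simplex} becomes \eqref{eq:balanced-frame}.

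\emph{Step 3: equality.} By Theorem~\ref{thm:weighted-simplex}, equality holds iff the simplex $(\mb v_i)$ is regular and all $\rho_{ij}$ are equal. It remains to check that the simplex is regular iff the frame is regular; this translation is the only step requiring care.

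From $U^TW=\delta_{\mb u}I_n$ we get $U^TU=\delta_{\mb u}^2(W^TW)^{-1}$. The simplex is regular with edge length $\ell$ iff $W^TW=\tfrac{\ell^2}{2}(I_n+\mb1\mb1^T)$. The inverse of this matrix is $\tfrac{2}{\ell^2}\bigl(I_n-\tfrac1{n+1}\mb1\mb1^T\bigr)$. So regularity of the simplex is equivalent to
\[
\mb u_i\cdot\mb u_j=a\bigl((n+1)\delta_{ij}-1\bigr)\qquad(i,j\le n),
\]
with $a=2\delta_{\mb u}^2/((n+1)\ell^2)$. The relation $\mb u_{n+1}=-\sum_{i\le n}\mb u_i$ then extends these inner products to all indices, which is exactly \eqref{eq:regular-frame}. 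Conversely, \eqref{eq:regular-frame} restricted to $i,j\le n$ gives this Gram matrix, and inverting recovers $W^TW$ of the regular form. Since $W^TW$ determines all edge lengths through the star at vertex $n+1$, all edges have length $\ell$. This completes the equality characterization.
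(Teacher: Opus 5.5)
Your proposal is correct and follows the paper's proof essentially step for step. You realize the $\mb w_{ij}$ as the edges of a reciprocal simplex, get $\delta=\delta_{\mb u}^{\,n-1}$ from $U^TW=\delta_{\mb u}I_n$, and use Cauchy--Binet for $\det A=\delta^2\tau(\rho)$. For equality you invert the Gram matrix to match a regular simplex with a regular frame. The only difference is that you center the simplex at its centroid rather than setting $\mb v_{n+1}=0$, which is immaterial.
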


\begin{proof}
Write $U=(\mb u_1,\ldots,\mb u_n)$ and choose simplex vertices
\[
 \mb v_{n+1}=0,\qquad
 \mb v_i=\delta_{\mb u}U^{-T}\mb e_i\quad(1\le i\le n).
\]
For $k\le n$, pairing with $\mb u_k$ gives
$\mb u_k\cdot\mb v_i=\delta_{\mb u}\delta_{ki}$.
The equation for $\mb u_{n+1}=-\sum_{k=1}^n\mb u_k$ then follows
by balance.  Thus the differences satisfy
\eqref{eq:reciprocal-pairing}, and uniqueness gives
$\mb w_{ij}=\mb v_i-\mb v_j$.  Their simplex determinant is
\[
 \delta_{\mb v}
 =\left|\det(\delta_{\mb u}U^{-T})\right|
 =\frac{\delta_{\mb u}^{\,n}}{|\det U|}
 =\delta_{\mb u}^{\,n-1}.
\]
As in the preceding proof, every tree of reciprocal edges has this
absolute determinant.  Cauchy--Binet therefore gives
$\det A=\delta_{\mb v}^{\,2}\tau(\rho)$.
Since $A$ is positive definite, $\tau(\rho)>0$, and
Theorem~\ref{thm:weighted-simplex} applies.  Substituting
$\delta_{\mb v}\tau(\rho)=\det A/\delta_{\mb v}$ gives
\eqref{eq:balanced-frame}.

For equality, the reciprocal simplex is regular precisely when its
edge-column matrix $V=(\mb v_1,\ldots,\mb v_n)$ satisfies
$V^TV=b(I_n+\mb1\mb1^T)$ for some $b>0$.  Since
\[
 V^TV=\delta_{\mb u}^{\,2}(U^TU)^{-1},\qquad
 (I_n+\mb1\mb1^T)^{-1}
 =I_n-\frac1{n+1}\mb1\mb1^T,
\]
the regular-simplex condition is equivalent to
\[
 U^TU=\frac{\delta_{\mb u}^{\,2}}{b}
             \left(I_n-\frac1{n+1}\mb1\mb1^T\right).
\]
Its entries give \eqref{eq:regular-frame} for $i,j\le n$, with
$a=\delta_{\mb u}^{\,2}/(b(n+1))$.  The pairings involving
$\mb u_{n+1}$ follow from $\mb u_{n+1}=-\sum_{i=1}^n\mb u_i$.
The converse follows by reversing the calculation.  Hence the
frame is regular exactly when the reciprocal simplex is regular,
and the equality statement follows from
Theorem~\ref{thm:weighted-simplex}.
\end{proof}

\section{Rhombic and elongated zonotopes}
\label{sec:rhombic}

The balanced-frame inequality applies directly to the adjugate of a
zone matrix.  It gives the sharp surface-area bound for the
$(n+1)$-generator rhombic class and for an elongated extension with
one additional subset-sum direction.

\subsection{The rhombic class}

Let $\Z_{\mathrm r,n}$ denote a zonotope generated by the vectors of a
regular balanced $(n+1)$-frame, with equal weights.  Its similarity
class does not depend on the frame or the common weight.  This is the
projection of an $(n+1)$-cube along a main diagonal, up to similarity;
for $n=3$ it is Kepler's regular rhombic dodecahedron.

\begin{theorem}[rhombic zonotopes]\label{thm:rhombic}
Let $(\mb u_i)_{i=1}^{n+1}$ be a balanced frame and $\beta_i>0$.
Then
\[
 \Z:=\sum_{i=1}^{n+1}[-\beta_i\mb u_i/2,\beta_i\mb u_i/2]
\]
satisfies
\begin{equation}\label{eq:rhombic}
 \iota_n(\Z)\ge 2c_n
 =n\sqrt2\,(n+1)^{1/(2n)}.
\end{equation}
Equality holds if and only if the frame is regular and all $\beta_i$
are equal; equivalently, $\Z$ is similar to $\Z_{\mathrm r,n}$.
\end{theorem}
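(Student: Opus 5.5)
The plan is to express both volume and surface area of $\Z$ through the reciprocal edges of the frame, and then apply Corollary~\ref{cor:balanced-frame} with weights chosen so that the matrix $A$ there becomes an adjugate. Lemma~\ref{lemma:adj} then identifies $\det A$ with a power of the volume.

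\emph{Volume.} Every $n$ of the frame vectors have determinant $\pm\delta_{\mb u}$, so \eqref{eq:zonotope-volume} gives
\[
 \vol_n(\Z)=\delta_{\mb u}\,e_n(\beta),
\]
where $e_n(\beta)=\sum_{|I|=n}\prod_{i\in I}\beta_i$.

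\emph{Identifying the cofactors.} An $(n-1)$-subset of $[n+1]$ is the complement $J$ of a unique pair $\{i,j\}$. I claim that $\mb c_J=\pm\mb w_{ij}$, with the same length. To see this, compare pairings with each $\mb u_k$:
\begin{itemize}
\item for $k\in J$, $\mb c_J\cdot\mb u_k=0$, since the determinant has a repeated column;
\item $\mb c_J\cdot\mb u_i=\pm\delta_{\mb u}$, since it is a maximal determinant of the frame;
\item $\mb c_J\cdot\mb u_j=-\mb c_J\cdot\mb u_i$, by balance.
\end{itemize}
This is exactly \eqref{eq:reciprocal-pairing} up to a global sign, and that system has a unique solution. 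So the sign ambiguity is harmless, and \eqref{eq:zonotope-area} becomes
\[
 \tfrac12\area(\partial\Z)=\sum_{i<j}\rho_{ij}|\mb w_{ij}|,
 \qquad \rho_{ij}:=\prod_{k\ne i,j}\beta_k>0 .
\]

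\emph{Computing $\det A$.} Let $Y$ have columns $\sqrt{\beta_i}\,\mb u_i$. Lemma~\ref{lemma:adj}, with $a_i=\beta_i$, gives
\[
 A=\sum_{i<j}\rho_{ij}\mb w_{ij}\mb w_{ij}^T=\adj(YY^T).
\]
By \eqref{eqn:cauchy-binet}, $\det(YY^T)=\delta_{\mb u}^2e_n(\beta)>0$, so $YY^T$ is positive definite and hence so is its adjugate $A$. Moreover
\[
 \det A=\det(YY^T)^{n-1}=\delta_{\mb u}^{2(n-1)}e_n(\beta)^{n-1},
\]
and therefore
\[
 \frac{\det A}{\delta_{\mb u}^{n-1}}=\bigl(\delta_{\mb u}e_n(\beta)\bigr)^{n-1}=\vol_n(\Z)^{n-1}.
\]
Corollary~\ref{cor:balanced-frame} now gives $\tfrac12\area(\partial\Z)\ge c_n\vol_n(\Z)^{(n-1)/n}$, which is \eqref{eq:rhombic}.

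\emph{Equality.} By the corollary, equality holds exactly when the frame is regular and all $\rho_{ij}$ are equal. Write $P=\prod_k\beta_k$, so that $\rho_{ij}=P/(\beta_i\beta_j)$. Equal $\rho_{ij}$ means all products $\beta_i\beta_j$ coincide. Since $n+1\ge3$, comparing $\beta_i\beta_j$ with $\beta_i\beta_k$ forces $\beta_j=\beta_k$ for all $j,k$, so all $\beta_i$ are equal. Conversely, a regular frame with equal $\beta_i$ gives equal $\rho_{ij}$, and this configuration is by definition $\Z_{\mathrm r,n}$ up to similarity.

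The only delicate step is the identification $|\mb c_J|=|\mb w_{ij}|$ with exactly the right normalization by $\delta_{\mb u}$, since the balance relation must fix the pairing with $\mb u_j$. After that, the argument is determinant bookkeeping together with the already established corollary.
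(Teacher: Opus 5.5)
Your proposal is correct and follows the paper's proof step for step. Cauchy--Binet gives $\det B_\beta=\delta_{\mb u}\vol_n(\Z)$, and Lemma~\ref{lemma:adj} together with \eqref{eq:zonotope-area} writes $\adj B_\beta$ and the half-area in terms of the reciprocal edges with weights $\prod_{k\ne i,j}\beta_k$. Corollary~\ref{cor:balanced-frame} is then applied to $\adj B_\beta$, and equality follows by comparing products whose omitted pairs share an index. Your explicit verifications that $\mb c_{[n+1]\setminus\{i,j\}}=\pm\mb w_{ij}$ and that $\adj B_\beta$ is positive definite fill in details the paper only asserts.
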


This is the surface-area case of a stronger intrinsic-volume theorem
of Jo\'os and L\'angi \cite[Theorem~3]{JoosLangi2023}.  We give a
tree-polynomial proof, since the same argument will also treat the
elongated family.

\begin{proof}
Set $B_\beta:=\sum_i\beta_i\mb u_i\mb u_i^T$ and $d:=\det B_\beta$.
All maximal minors of the frame have absolute value $\delta_{\mb u}$.
By \eqref{eqn:cauchy-binet} and \eqref{eq:zonotope-volume},
\begin{equation}\label{eq:rhombic-det-volume}
 d=\delta_{\mb u}\vol_n(\Z).
\end{equation}
With $\mb w_{ij}$ as in \eqref{eq:reciprocal-pairing}, put
$\gamma_{ij}:=\prod_{k\notin\{i,j\}}\beta_k$.
The adjugate and surface-area formulas give
\[
 \adj B_\beta=\sum_{i<j}\gamma_{ij}\mb w_{ij}\mb w_{ij}^T,
 \qquad
 \frac12\area(\partial\Z)=\sum_{i<j}\gamma_{ij}|\mb w_{ij}|.
\]
Applying Corollary~\ref{cor:balanced-frame} to $\adj B_\beta$ yields
\begin{equation}\label{eq:rhombic-chain}
 \frac12\area(\partial\Z)
 \ge c_n\left(\frac{\det(\adj B_\beta)}{
                          \delta_{\mb u}^{\,n-1}}\right)^{1/n}
 =c_n\vol_n(\Z)^{(n-1)/n}.
\end{equation}
Equality forces a regular frame and equal $\gamma_{ij}$.
Comparing two products whose omitted pairs share an index shows
that all $\beta_i$ are equal.  The converse follows from the equality
case of Corollary~\ref{cor:balanced-frame}.
\end{proof}

Every configuration of $n+1$ generators with every $n$ independent
can be written in this form: orient the generators so that their
unique dependence has positive coefficients, and absorb these
coefficients into a balanced frame.  Reversing a centered generating
segment does not change the zonotope.

\subsection{One additional subset-sum direction}

\begin{theorem}[elongated zonotopes]\label{thm:elongated}
Let $n\ge3$, let $(\mb u_i)_{i=1}^{n+1}$ be a balanced frame, and let
$S\subset[n+1]$ satisfy $|S|,|S^c|\ge2$.  Put
\[
 \mb u_S:=\sum_{i\in S}\mb u_i=-\sum_{j\in S^c}\mb u_j.
\]
For $\beta_i>0$ and $t\ge0$, the zonotope
\begin{equation}\label{eq:elongated-family}
 \Z_t:=\sum_{i=1}^{n+1}[-\beta_i\mb u_i/2,\beta_i\mb u_i/2]
                +[-t\mb u_S/2,t\mb u_S/2]
\end{equation}
satisfies $\iota_n(\Z_t)\ge2c_n$.  Equality holds if and only if
$t=0$, the frame is regular, and all $\beta_i$ are equal.
For $t>0$ the bound is strict and is sharp on the closure of this family.
\end{theorem}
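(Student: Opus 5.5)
The plan is to reduce everything to the exact dependence of volume and surface area on $t$. Both turn out to be affine in $t$, so the problem becomes a one-variable comparison anchored at $t=0$ by Theorem~\ref{thm:rhombic}.

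\textbf{Step 1: volume.} Put $B_\beta(t):=\sum_i\beta_i\mb u_i\mb u_i^T+t\,\mb u_S\mb u_S^T$. I first classify the maximal minors that involve $\mb u_S$. Take $n-1$ frame vectors, omitting the pair $\{i,j\}$. Modulo their span, $\mb u_S\equiv\mb u_i$ if $|S\cap\{i,j\}|=1$. If $\{i,j\}\subset S$ or $\{i,j\}\subset S^c$, then $\mb u_S\equiv\mb u_i+\mb u_j\equiv0$ by balance. Hence every nonzero maximal minor still has absolute value $\delta_{\mb u}$. Combining \eqref{eqn:cauchy-binet} with \eqref{eq:zonotope-volume} gives $\det B_\beta(t)=\delta_{\mb u}\vol_n(\Z_t)$, exactly as in \eqref{eq:rhombic-det-volume}. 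It also gives
\[
 \vol_n(\Z_t)=V_0+t\,P,\qquad P=\delta_{\mb u}\sum_{|S\cap\{i,j\}|=1}\textstyle\prod_{k\ne i,j}\beta_k .
\]

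\textbf{Step 2: surface area.} By \eqref{eq:zonotope-area}, $\tfrac12\area(\partial\Z_t)=A_0/2+t\,Q/2$. Here $A_0=\area(\partial\Z_0)$, and $Q/2$ is the sum of $\prod\beta$ times $|\mb c_{\{S\}\cup K}|$ over $(n-2)$-subsets $K$. Each such cofactor is linear in $\mb u_S$. Expanding $\mb u_S$ and using $\mb w_{ij}=\pm\mb c_{[n+1]\setminus\{i,j\}}$, the cofactor $\mb c_{\{S\}\cup K}$ is a signed sum of the reciprocal edges $\mb w_{ab}$ with $\{a,b\}$ inside the three omitted indices. It reduces to a single $\pm\mb w_{ab}$ or a difference of two of them, depending on how $S$ splits those three indices.

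\textbf{Step 3: the comparison in $t$.} Define
\[
 g(t)=\bigl(\tfrac12\area(\partial\Z_t)\bigr)^n-c_n^n\vol_n(\Z_t)^{n-1}.
\]
Theorem~\ref{thm:rhombic} gives $g(0)\ge0$. The first term is a convex polynomial in $t$ with nonnegative coefficients, and the second is a polynomial of degree $n-1$. It therefore suffices to prove a derivative inequality of the form
\[
 \frac{Q}{P}\ \ge\ \frac{n-1}{n}\cdot\frac{A(t)}{V(t)}
 \quad\text{whenever } \iota_n(\Z_t)=2c_n .
\]
Equivalently, $\iota_n$ cannot return to the value $2c_n$ once $t>0$. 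Because $\iota_n(\Z_t)\sim t^{1/n}\to\infty$, strictness for $t>0$ would follow.

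I would try to obtain this by applying Corollary~\ref{cor:balanced-frame} not only to $\adj B_\beta(0)$ but to $\adj B_\beta(t)$ itself. For this I rewrite each $\mb u_S$-cofactor as a reciprocal edge of a modified balanced frame. Concretely, $\mb u_S$ together with the frame vectors of $S^c$, or of $S$, is again balanced. The idea is to split the $t$-terms of the adjugate between these two smaller frames and bound each piece from below by the corollary. I then recombine them by Minkowski's determinant inequality, $(\det(X+Y))^{1/n}\ge(\det X)^{1/n}+(\det Y)^{1/n}$, against $\det\adj B_\beta(t)=d^{\,n-1}$.

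\textbf{Main obstacle.} The hard step is the mixed cofactors, i.e.\ the differences $\mb w_{ab}-\mb w_{ac}$. For these the triangle inequality points the wrong way. If the splitting into sub-frames does not close, I would fall back on the explicit one-variable inequality above, verified using the linear formulas of Steps 1--2.

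\textbf{Equality and sharpness.} Equality forces $t=0$, and then the equality case of Theorem~\ref{thm:rhombic} applies. Sharpness on the closure of the family is immediate, since $t\to0$ recovers $\Z_0$.
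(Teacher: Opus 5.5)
Your proposal does not actually prove $\iota_n(\Z_t)\ge 2c_n$ for $t>0$. Step~3 reduces the claim to a derivative inequality, but that inequality is never proved. The sub-frame plan is not carried out, and as described it does not fit the hypotheses of Corollary~\ref{cor:balanced-frame}: $\mb u_S$ together with the $S^c$-vectors (or the $S$-vectors) is a tuple of at most $n$ vectors, not a balanced $(n+1)$-frame in $\R^n$. Finally, ``equality forces $t=0$'' is asserted without argument.

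The trouble starts in Step~2, where you stop one identity short. The ``mixed'' cofactors are not an obstacle. Reciprocal edges satisfy the cycle relation $\mb w_{ab}+\mb w_{bc}+\mb w_{ca}=0$; to see this, pair with each $\mb u_k$ using \eqref{eq:reciprocal-pairing}. Hence $\mb w_{ab}-\mb w_{ac}=\mb w_{cb}$ is again a single core reciprocal edge. Equivalently, let $\{i,j,\ell\}$ be the indices omitted by $K$ and $W_K$ the span of $\mb u_k$, $k\in K$. Balance gives $\mb u_i+\mb u_j+\mb u_\ell\equiv0\pmod{W_K}$, so:
\begin{itemize}
\item if $S\cap\{i,j,\ell\}=\{i,j\}$, then $\mb c_{\{S\}\cup K}=\pm\mb c_{\{\ell\}\cup K}=\pm\mb w_{ij}$;
\item if $S$ meets the triple in one index, the cofactor is $\pm\mb w$ of the other two;
\item otherwise it vanishes.
\end{itemize}

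So the extra zone creates no new cofactor direction. Grouping equal cofactor lines gives $\adj B_\beta(t)=\sum_{i<j}\gamma_{ij}\mb w_{ij}\mb w_{ij}^T$ and $\frac12\area(\partial\Z_t)=\sum_{i<j}\gamma_{ij}|\mb w_{ij}|$ with the \emph{same} nonnegative coefficients. Here $\gamma_{ij}$ is $\prod_{k\ne i,j}\beta_k$ times $1+t\sum_{k\in S^c}\beta_k^{-1}$, $1+t\sum_{k\in S}\beta_k^{-1}$, or $1$, according as $i,j\in S$, $i,j\in S^c$, or the pair crosses the shores.

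Corollary~\ref{cor:balanced-frame} therefore applies verbatim at every $t\ge0$. Your Step~1 gives $\det\adj B_\beta(t)=\det B_\beta(t)^{n-1}=(\delta_{\mb u}\vol_n(\Z_t))^{n-1}$, so the corollary yields $\frac12\area(\partial\Z_t)\ge c_n\vol_n(\Z_t)^{(n-1)/n}$ directly. No one-variable comparison or Minkowski inequality is needed.

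The explicit $\gamma_{ij}$ also give the equality argument you omitted. Equality forces a regular frame and equal $\gamma_{ij}$. Equal cross-shore coefficients force $\beta_i=a$ on $S$ and $\beta_j=b$ on $S^c$. Comparing within-shore and cross-shore coefficients then gives $a=b+|S^c|t$ and $b=a+|S|t$, hence $(n+1)t=0$.
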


\begin{proof}
Set $B_\beta:=\sum_i\beta_i\mb u_i\mb u_i^T+t\mb u_S\mb u_S^T$.
Every nonzero maximal minor of the unweighted configuration
$(\mb u_1,\ldots,\mb u_{n+1},\mb u_S)$ has absolute value
$\delta_{\mb u}$.  To see this for a minor involving $\mb u_S$,
let $i,j$ be the two omitted core indices and put
$W_{ij}:=\operatorname{span}\{\mb u_k:k\notin\{i,j\}\}$.
The balance relation says that $\mb u_j$ is congruent to
$-\mb u_i$ modulo the other $n-1$ core vectors:
\[
 \mb u_j\equiv-\mb u_i\pmod{W_{ij}},\qquad
 \mb u_S\equiv
 \begin{cases}
  0,&i,j\text{ on the same shore},\\
  \mb u_i,&i\in S,\ j\in S^c,\\
  -\mb u_i,&i\in S^c,\ j\in S
 \end{cases}
 \pmod{W_{ij}}.
\]
Adding a vector of $W_{ij}$ to the remaining column does not change
the determinant.  The minor is therefore zero in the first case and
has absolute value $\delta_{\mb u}$ in the other two cases.
Cauchy--Binet \eqref{eqn:cauchy-binet} sums the squares of these
minors, whereas the volume formula \eqref{eq:zonotope-volume} sums
their absolute values, with the same weight products.  Consequently
\begin{equation}\label{eq:elongated-det-volume}
 \det B_\beta=\delta_{\mb u}\vol_n(\Z_t).
\end{equation}

Nor does the extra zone introduce a new cofactor direction.
A cofactor involving $\mb u_S$ contains the core vectors with indices
in some set $K$ of size $n-2$.  Put
$W_K:=\operatorname{span}\{\mb u_k:k\in K\}$ and let
$\{i,j,\ell\}=[n+1]\setminus K$.  Then
\[
 \mb u_i+\mb u_j+\mb u_\ell\equiv0\pmod{W_K},\qquad
 \mb u_S\equiv\sum_{k\in S\setminus K}\mb u_k\pmod{W_K}.
\]
If the three omitted indices are all on the same shore,
$\mb u_S$ is congruent to zero modulo $W_K$.  Otherwise it is
congruent to the uniquely omitted vector on one shore, or to the
negative of that vector, modulo $W_K$.  Multilinearity of the
cofactor now shows that every nonzero new cofactor is
$\pm\mb w_{ij}$ for a core pair $ij$.  Grouping equal cofactor lines gives
\begin{equation}\label{eq:elongated-adj-area}
 \adj B_\beta=\sum_{i<j}\gamma_{ij}\mb w_{ij}\mb w_{ij}^T,
 \qquad
 \frac12\area(\partial\Z_t)=\sum_{i<j}\gamma_{ij}|\mb w_{ij}|,
\end{equation}
where
\begin{equation}\label{eq:elongated-coefficients}
 \gamma_{ij}=\left(\prod_{k\notin\{i,j\}}\beta_k\right)
 \begin{cases}
  1+t\displaystyle\sum_{k\in S^c}\beta_k^{-1},&i,j\in S,\\[4pt]
  1+t\displaystyle\sum_{k\in S}\beta_k^{-1},&i,j\in S^c,\\[4pt]
  1,&i,j\text{ on different shores}.
 \end{cases}
\end{equation}
For example, if $i,j\in S$, the vector $\mb u_S$ is a sum of the
$S^c$-generators up to sign, and in a nonzero cofactor it can replace
exactly one of them.  This gives the first line; the other cases
follow in the same way.  Applying Corollary~\ref{cor:balanced-frame}
to \eqref{eq:elongated-adj-area} and using
\eqref{eq:elongated-det-volume} gives the chain
\eqref{eq:rhombic-chain} with $\Z_t$ in place of $\Z$.

For equality, the frame is regular and every $\gamma_{ij}$ is the
same.  Comparing cross-shore pairs first gives
$\beta_i=a$ on $S$ and $\beta_j=b$ on $S^c$.
Comparing within-shore pairs with cross-shore pairs then gives
\[
 a=b+|S^c|t,\qquad b=a+|S|t.
\]
Thus $(n+1)t=0$ and $a=b$.  These conditions are sufficient by
Theorem~\ref{thm:rhombic}.  Taking equal core weights in a regular
frame and letting $t\downarrow0$ proves sharpness.
\end{proof}

For $n=3$ both shores have two elements, giving the usual elongated
dodecahedral direction.  The theorem concerns the subset-sum family
\eqref{eq:elongated-family}, not arbitrary $(n+2)$-generator zonotopes.

\section{Graphical zonotopes and canonical position}
\label{sec:graphical}

\subsection{The regular frame and the zone matrix}

Fix a regular balanced frame $\mb n_1,\ldots,\mb n_{n+1}\in\R^n$
normalized by $|\det(\mb n_1,\ldots,\mb n_n)|=1$.  Its Gram matrix is
\begin{equation}\label{eq:normal-gram}
 \mb n_i\cdot\mb n_j
 =(n+1)^{-(n-1)/n}\bigl((n+1)\delta_{ij}-1\bigr).
\end{equation}
Its reciprocal edge vectors, as defined in
\eqref{eq:reciprocal-pairing}, are characterized by
\begin{equation}\label{eq:zone-vector}
 \mb n_k\cdot\mb w_{ij}=\delta_{ki}-\delta_{kj},
 \qquad
 \mb w_{ij}=(n+1)^{-1/n}(\mb n_i-\mb n_j).
\end{equation}
For $\beta=(\beta_e)_{e\in\E}>0$ and $M\in\SL(n,\R)$, consider
\begin{equation}\label{eq:param}
 \Z(\beta,M):=
 \sum_{ij\in\E}
 [-\beta_{ij}M\mb w_{ij}/2,\beta_{ij}M\mb w_{ij}/2].
\end{equation}
This is a weighted $\K_{n+1}$-graphical zonotope with a
volume-preserving affine deformation.  Write
$\Z_{\mathrm p,n}:=\Z(\mb1,I_n)$ for the symmetric permutohedron;
for $n=3$ this is the Archimedean truncated octahedron.
Define
\begin{equation}\label{eq:Bbeta}
 B_\beta:=\sum_{e\in\E}\beta_e\mb w_e\mb w_e^T,
 \qquad d=d(\beta):=\det B_\beta>0.
\end{equation}

The facet pairs are indexed by the nontrivial unordered
bipartitions $b=\br{S\mid S^c}$ of $[n+1]$; denote this set by $\F$.
For each $b$, choose either shore and put
\begin{equation}\label{eq:bond-normal}
 \mb n_b:=\sum_{i\in S}\mb n_i,
 \qquad
 \gamma_b:=\tau_S(\beta)\tau_{S^c}(\beta).
\end{equation}
Here $\tau_S$ is the spanning-tree polynomial of the complete graph
induced on $S$, and $\tau_{\{i\}}=1$.  Complementing $S$ changes only
the sign of $\mb n_b$, which will not affect any formula.
The corresponding \emph{cut} is the set of edges with one endpoint
in each shore; we call $\mb n_b$ a \emph{cut normal}.
A \emph{singleton cut} has one shore $\{i\}$; we abbreviate it by
$\br i$ and may choose its normal to be $\mb n_i$.
There are $2^n-1$ facet-normal lines.  Moreover,
\begin{equation}\label{eq:normal-length}
 |\mb n_b|^2=(n+1)^{-(n-1)/n}|S|\,|S^c|.
\end{equation}

\begin{proposition}[tree and cut formulas]\label{prop:tree-cut}
For the zonotope \eqref{eq:param},
\begin{align}
 \vol_n(\Z(\beta,M))&=d=\tau(\beta),\label{eq:volume}\\
 \adj B_\beta&=\sum_{b\in\F}\gamma_b\mb n_b\mb n_b^T,
                                      \label{eq:adjB}\\
 \frac12\area(\partial\Z(\beta,M))
 &=\sum_{b\in\F}\gamma_b|M^{-T}\mb n_b|.
                                      \label{eq:surface-area}
\end{align}
The area of one facet in pair $b$ is $\gamma_b|M^{-T}\mb n_b|$.
Finally, the quadratic value of $B_\beta$ on a cut normal is the cut weight:
\begin{equation}\label{eq:rb}
 r_b:=\mb n_b^TB_\beta\mb n_b
 =\sum_{\substack{i\in S\,,\ j\in S^c}}\beta_{ij}.
\end{equation}
\end{proposition}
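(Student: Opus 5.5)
The plan is to compute everything from the Cauchy--Binet expansion \eqref{eqn:cauchy-binet}, Lemma~\ref{lemma:adj}, and the zonotope formulas \eqref{eq:zonotope-volume}--\eqref{eq:zonotope-area}, all applied to the generators $\mb w_e$ with weights $\beta_e$.

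\emph{Volume.} The vectors $\mb w_{ij}$ are the edge vectors of a simplex. Indeed, by \eqref{eq:zone-vector}, $\mb w_{ij}=\mb v_i-\mb v_j$ with $\mb v_i=(n+1)^{-1/n}\mb n_i$. As in the proof of Theorem~\ref{thm:weighted-simplex}, a set of $n$ edges is independent exactly when it is a spanning tree. Every tree then has the same absolute determinant, namely that of the star at vertex $n+1$. By the proof of Corollary~\ref{cor:balanced-frame}, this common value is $\delta_{\mb v}=\delta_{\mb n}^{\,n-1}=1$. Hence \eqref{eq:zonotope-volume} and \eqref{eqn:cauchy-binet} both reduce to $\sum_{J\in\T}\prod_{e\in J}\beta_e=\tau(\beta)$. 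This proves $\vol_n=d=\tau(\beta)$, and $M\in\SL(n,\R)$ does not change the volume.

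\emph{Adjugate and area.} By Lemma~\ref{lemma:adj}, $\adj B_\beta=\sum_{|J|=n-1}\beta^J\mb c_J\mb c_J^T$, where $\beta^J:=\prod_{e\in J}\beta_e$. The key step is to identify the cofactors of sets of $n-1$ edges.

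If $J$ contains a cycle, then $\mb c_J=0$. Otherwise $J$ is a spanning forest with exactly two components, $S$ and $S^c$. Then $\mb c_J\cdot\mb x$ vanishes whenever $\mb x=\mb w_{ij}$ with $i,j$ on the same shore, because such an $\mb x$ lies in the span of $J$. So $\mb c_J$ is orthogonal to those $\mb w_{ij}$. The vector $\mb n_b=\sum_{i\in S}\mb n_i$ has the same property: by \eqref{eq:zone-vector}, $\mb n_b\cdot\mb w_{ij}=\mathbf 1_S(i)-\mathbf 1_S(j)$, which is zero for same-shore pairs. These same-shore vectors span a hyperplane, so $\mb c_J=\lambda\mb n_b$.

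To find $\lambda$, pair both sides with a cross edge $\mb w_{ij}$, where $i\in S$ and $j\in S^c$. Then $J\cup\{ij\}$ is a spanning tree, so $|\mb c_J\cdot\mb w_{ij}|=1$, while $\mb n_b\cdot\mb w_{ij}=1$. Hence $\mb c_J=\pm\mb n_b$.

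Grouping the forests by their bipartition, the forests inducing $\br{S\mid S^c}$ are exactly the pairs consisting of a spanning tree on $S$ and a spanning tree on $S^c$. Their weights therefore sum to $\tau_S(\beta)\tau_{S^c}(\beta)=\gamma_b$, which gives \eqref{eq:adjB}.

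For the area, \eqref{eq:zonotope-area} applied to the generators $M\mb w_e$ uses the cofactors $\det(M)M^{-T}\mb c_J=\pm M^{-T}\mb n_b$. The same grouping gives \eqref{eq:surface-area}. The individual facet statement follows because the facets with normal $\pm M^{-T}\mb n_b$ are translates of the zonotope generated by the same-shore edges. The area of one such facet is the $b$-group of the area sum, which is the standard Shephard facet decomposition.

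\emph{Cut weight.} Compute directly:
\[
r_b=\sum_e\beta_e(\mb n_b\cdot\mb w_e)^2=\sum_{i\in S,\ j\in S^c}\beta_{ij},
\]
since $\mb n_b\cdot\mb w_{ij}\in\{0,\pm1\}$ and is nonzero exactly on cut edges.

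The main obstacle is the cofactor identification $\mb c_J=\pm\mb n_b$, in particular confirming the normalization $\delta_{\mb v}=1$ from $|\det(\mb n_1,\ldots,\mb n_n)|=1$. This determines the constant in front of both $\gamma_b$ and the volume. I would check it directly: \eqref{eq:zone-vector} says that $(\mb w_{1,n+1},\ldots,\mb w_{n,n+1})$ is the dual basis to $(\mb n_1,\ldots,\mb n_n)$, so its determinant is $\pm1$.
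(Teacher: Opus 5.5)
Your proposal is correct and follows essentially the same route as the paper. It gets unit tree determinants from the fact that $\mb w_{i,n+1}=U^{-T}\mb e_i$ is the dual basis to $U=(\mb n_1,\ldots,\mb n_n)$, identifies each two-component forest cofactor as $\pm\mb n_b$ by orthogonality to same-shore edges and pairing with a crossing edge, groups by cuts to obtain $\gamma_b=\tau_S\tau_{S^c}$, and computes $r_b$ directly. Your detour through the simplex and Corollary~\ref{cor:balanced-frame} is redundant given the direct dual-basis check, and your face-decomposition justification of the single-facet formula is slightly more explicit than the paper's one-line grouping remark.
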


\begin{proof}
Put $U=(\mb n_1,\ldots,\mb n_n)$.  By \eqref{eq:zone-vector},
$\mb w_{i,n+1}=U^{-T}\mb e_i$ and
$\mb w_{ij}=U^{-T}(\mb e_i-\mb e_j)$ for $i,j\le n$.
Thus a set of $n$ generators is independent exactly when it is a
spanning tree.  Its determinant has absolute value one, since the
reduced incidence matrix of a tree has determinant $\pm1$ and
$|\det U|=1$.  Equations \eqref{eqn:cauchy-binet} and
\eqref{eq:zonotope-volume} prove \eqref{eq:volume}.

An independent set of $n-1$ generators is a forest with two
components, say $S$ and $S^c$.  Its cofactor is parallel to $\mb n_b$:
both are orthogonal to every internal edge of these components.
Adding an edge across the cut makes a spanning tree, and its pairing
with $\mb n_b$ is $\pm1$.  The cofactor is therefore exactly
$\pm\mb n_b$.  The weighted sum of the forests with these components
is $\tau_S(\beta)\tau_{S^c}(\beta)$.  Grouping the terms in
\eqref{eq:adj} and \eqref{eq:zonotope-area} by the cut gives
\eqref{eq:adjB} and \eqref{eq:surface-area}, including the individual
facet formula.  Finally $\mb n_b\cdot\mb w_{ij}$ is zero for an
internal edge and is $\pm1$ for an edge crossing the cut, proving
\eqref{eq:rb}.
\end{proof}

The volume, adjugate, and total surface-area formulas remain valid
for nonnegative weights whenever the zonotope is full dimensional;
a cut whose coefficient is zero need no longer index an actual facet.
The entries of $B_\beta$ and the polynomials $d,\gamma_b,r_b$ are
homogeneous in $\beta$ of degrees $1,n,n-1,1$, respectively.

\subsection{Canonical affine position}

The zone matrix selects the determinant-one map and canonical representative
\begin{equation}\label{eq:canonical}
 \M_\beta:=d^{1/(2n)}B_\beta^{-1/2},
 \qquad \mathring{\Z}_\beta:=\Z(\beta,\M_\beta).
\end{equation}
For its facet pair $b$, let $\widehat{\mb n}_b$ be the unit normal
and $\widehat\gamma_b$ the area of one facet.  By
Proposition~\ref{prop:tree-cut},
\begin{equation}\label{eq:canonical-data}
 \widehat{\mb n}_b=\frac{B_\beta^{1/2}\mb n_b}{\sqrt{r_b}},
 \qquad \widehat\gamma_b=d^{-1/(2n)}\gamma_b\sqrt{r_b}.
\end{equation}
The associated surface-area tensor is
\begin{equation}\label{eq:Hbeta}
 H_\beta:=\sum_{b\in\F}\widehat\gamma_b
                    \widehat{\mb n}_b\widehat{\mb n}_b^T.
\end{equation}

\begin{proposition}[affine reduction]\label{prop:affine}
For every $\beta>0$ and $M\in\SL(n,\R)$,
\begin{equation}\label{eq:surface-detH}
 \area(\partial\Z(\beta,M))\ge2n(\det H_\beta)^{1/n}.
\end{equation}
\end{proposition}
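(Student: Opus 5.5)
Every $M\in\SL(n,\R)$ factors through the canonical representative: write $M=N\M_\beta$ with $N:=M\M_\beta^{-1}\in\SL(n,\R)$, since $\det\M_\beta=d^{1/2}\cdot d^{-1/2}=1$. Then $\Z(\beta,M)=N\mathring{\Z}_\beta$, and the problem reduces to bounding from below the surface area of $N$ applied to a fixed polytope, uniformly over $N\in\SL(n,\R)$. To use that polytope's facet data, check first that \eqref{eq:canonical-data} is correct. By \eqref{eq:surface-area}, one facet of $\mathring{\Z}_\beta$ in pair $b$ has area $\gamma_b|\M_\beta^{-T}\mb n_b|=\gamma_b d^{-1/(2n)}|B_\beta^{1/2}\mb n_b|$. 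Also $|B_\beta^{1/2}\mb n_b|^2=\mb n_b^TB_\beta\mb n_b=r_b$ by \eqref{eq:rb}. This gives $\widehat\gamma_b$ and the unit normal $\widehat{\mb n}_b$.

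Next I apply the cofactor transformation rule to $N$. Facet area vectors transform by $\widehat\gamma_b\widehat{\mb n}_b\mapsto\widehat\gamma_bN^{-T}\widehat{\mb n}_b$, because $\det N=1$. This is the same rule $\mb c_I\mapsto\det(M)M^{-T}\mb c_I$ used in \eqref{eq:surface-area}. Hence
\[
 \tfrac12\area(\partial\Z(\beta,M))=\sum_{b\in\F}\widehat\gamma_b|N^{-T}\widehat{\mb n}_b|.
\]
The key pointwise estimate is $|P\widehat{\mb n}|\ge\widehat{\mb n}^TP\widehat{\mb n}$ for a unit vector $\widehat{\mb n}$ and symmetric positive definite $P$, which is Cauchy--Schwarz. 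To use it, take the polar decomposition $N^{-T}=QP$ with $Q$ orthogonal and $P$ symmetric positive definite of determinant one. The norms are unchanged by $Q$, so
\[
 \tfrac12\area\ge\sum_b\widehat\gamma_b\,\widehat{\mb n}_b^TP\widehat{\mb n}_b=\tr(PH_\beta).
\]

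To finish I apply trace AM--GM to $PH_\beta$. Since $P$ and $H_\beta$ are both positive definite, the eigenvalues of $PH_\beta$, which are those of $H_\beta^{1/2}PH_\beta^{1/2}$, are positive. Therefore
\[
 \tr(PH_\beta)\ge n\det(PH_\beta)^{1/n}=n(\det H_\beta)^{1/n},
\]
because $\det P=1$. Multiplying by two gives \eqref{eq:surface-detH}. I also need $H_\beta$ to be positive definite. This holds because the facet normals of a full-dimensional polytope span $\R^n$ and every $\widehat\gamma_b>0$ when $\beta>0$.

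The step I expect to need the most care is the transformation of the facet data under $N$: I must confirm that the unit normal and area of each facet of $N\mathring{\Z}_\beta$ are exactly given by $N^{-T}$ acting on the area vector. I will derive this from \eqref{eq:surface-area} applied directly with $M=N\M_\beta$, since $M^{-T}=N^{-T}\M_\beta^{-T}$. This avoids relying on any general fact about polytopes.
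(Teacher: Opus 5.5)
Your proof is correct and is essentially the paper's argument. Your $N=M\M_\beta^{-1}$ is the paper's $A$, and your polar factor $P=(N^{-1}N^{-T})^{1/2}$ is exactly its $R$. Both then use the Cauchy--Schwarz bound $|P\widehat{\mb n}_b|\ge\widehat{\mb n}_b^TP\widehat{\mb n}_b$ followed by trace AM--GM; you apply it via $H_\beta^{1/2}PH_\beta^{1/2}$, the paper via $R^{1/2}H_\beta R^{1/2}$. Deriving the facet transformation directly from \eqref{eq:surface-area} with $M^{-T}=N^{-T}\M_\beta^{-T}$ is a sound way to justify the step the paper states in one line.
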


\begin{proof}
Put $A:=M\M_\beta^{-1}$ and $R:=(A^{-1}A^{-T})^{1/2}$.
Then $R$ is symmetric positive definite and $\det R=1$.
For every vector $\mb v$,
\[
 |A^{-T}\mb v|^2
 =\mb v^TA^{-1}A^{-T}\mb v=|R\mb v|^2.
\]
The determinant-one map $A$ carries the canonical representative
$\mathring{\Z}_\beta$ to $\Z(\beta,M)$.  Its action on facet areas,
using \eqref{eq:canonical-data}, therefore gives
\[
 \frac12\area(\partial\Z(\beta,M))
 =\sum_{b\in\F}\widehat\gamma_b|A^{-T}\widehat{\mb n}_b|
 =\sum_{b\in\F}\widehat\gamma_b|R\widehat{\mb n}_b|.
\]
For a unit vector $\mb v$, Cauchy--Schwarz gives
$|R\mb v|=|\mb v||R\mb v|\ge\mb v^TR\mb v$, with equality exactly
when $\mb v$ is an eigenvector of $R$.  The tensor $H_\beta$ is
positive definite because its positively weighted normals span
$\R^n$.  We may therefore apply trace AM--GM to the positive
definite matrix $R^{1/2}H_\beta R^{1/2}$:
\begin{equation}\label{eq:trace-am-gm}
 \begin{aligned}
 \frac12\area(\partial\Z(\beta,M))
 &\ge\sum_{b\in\F}\widehat\gamma_b
                   \widehat{\mb n}_b^TR\widehat{\mb n}_b
 =\tr(RH_\beta)=\tr(R^{1/2}H_\beta R^{1/2})\\
 &\ge n\det(R^{1/2}H_\beta R^{1/2})^{1/n}
 =n(\det H_\beta)^{1/n}.
 \end{aligned}
\end{equation}
This proves \eqref{eq:surface-detH}.  Equality in trace AM--GM holds
if and only if $R^{1/2}H_\beta R^{1/2}$ is scalar.  Equality in the
first estimate additionally requires every canonical facet normal
to be an eigenvector of $R$.
\end{proof}

\section{Weighted averaging and equality}
\label{sec:averaging}

The normal lengths supply the calibration for a single application
of Jensen's inequality.  Define two positive polynomial scalars by
\begin{align}\label{eq:phi-psi}
 \varphi:=\det\left(\sum_{b\in\F}
              \frac{\gamma_b}{|\mb n_b|}\mb n_b\mb n_b^T\right),
 \qquad
 \psi:=d^{2n-3}\det\left(\sum_{b\in\F}
              \frac{\gamma_b r_b}{|\mb n_b|^3}\mb n_b\mb n_b^T\right).
\end{align}
The polynomials are homogeneous in $\beta$ of degrees
$n(n-1)$ and $3n(n-1)$, respectively.

\begin{theorem}[graphical zonotope inequality]\label{thm:graphical}
For every positive zone vector $\beta$ and every $M\in\SL(n,\R)$,
\begin{equation}\label{eq:graphical-bound}
 \iota_n(\Z(\beta,M))
 \ge 2n\left(\frac{\varphi(\beta)^3}{\psi(\beta)}\right)^{1/(2n)}.
\end{equation}
Equality holds if and only if all zone weights are equal and
$M\in\SO(n)$.
\end{theorem}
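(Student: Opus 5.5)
The plan is to combine Proposition~\ref{prop:affine} with a Hölder-type inequality applied to the Cauchy--Binet expansion of three determinants. By \eqref{eq:volume}, $\vol_n(\Z(\beta,M))=d$. By \eqref{eq:surface-detH}, it therefore suffices to prove
\[
 (\det H_\beta)^2\ \ge\ d^{\,2n-2}\,\frac{\varphi^3}{\psi}.
\]

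\emph{Step 1: rewrite $H_\beta$.} Substitute \eqref{eq:canonical-data} into \eqref{eq:Hbeta}. This gives
\[
 H_\beta=d^{-1/(2n)}B_\beta^{1/2}\Bigl(\sum_b\gamma_b r_b^{-1/2}\mb n_b\mb n_b^T\Bigr)B_\beta^{1/2},
 \qquad
 \det H_\beta=d^{1/2}\det\Bigl(\sum_b\gamma_b r_b^{-1/2}\mb n_b\mb n_b^T\Bigr).
\]

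\emph{Step 2: set up three determinants with the same coefficients.} Put $P_b:=\frac{\gamma_b}{|\mb n_b|}\mb n_b\mb n_b^T$ and define the normalized quadratic values $x_b:=r_b/|\mb n_b|^2>0$. Consider the three matrices $\sum_b P_b$, $\sum_b P_b x_b^{-1/2}$ and $\sum_b P_b x_b$. Their determinants are, respectively, $\varphi$, then $d^{-1/2}\det H_\beta$ by Step~1, and $\psi/d^{2n-3}$ by \eqref{eq:phi-psi}.

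Since each $P_b$ has rank one, Cauchy--Binet \eqref{eqn:cauchy-binet} gives, for any weights $y_b\ge0$,
\[
 \det\Bigl(\sum_b y_bP_b\Bigr)=\sum_{|I|=n}c_I\prod_{b\in I}y_b,
 \qquad c_I\ge0,
\]
with the same coefficients $c_I$ for all $y$.

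\emph{Step 3: Hölder.} Write $X_I:=\prod_{b\in I}x_b$ and factor
\[
 c_I=\bigl(c_IX_I^{-1/2}\bigr)^{2/3}\bigl(c_IX_I\bigr)^{1/3}.
\]
Hölder's inequality with exponents $3/2$ and $3$ then gives
\[
 \varphi\le\bigl(d^{-1/2}\det H_\beta\bigr)^{2/3}\bigl(\psi d^{3-2n}\bigr)^{1/3}.
\]
Cubing and rearranging yields $(\det H_\beta)^2\ge d^{2n-2}\varphi^3/\psi$, which is the required bound. This single Hölder step is the "calibrated Jensen" of the outline.

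\emph{Step 4: equality.} This is the main obstacle. Equality in Hölder forces $X_I$ to be constant over all $I$ with $c_I>0$, that is, over all $n$-subsets of cut normals that form a basis. I would first show that the bases of $\{\mb n_b\}$ are connected under single exchanges $I\to I-b+b'$. This should hold because all $2^n-1$ cut-normal lines together form a rich matroid containing, for example, the frame $\mb n_1,\dots,\mb n_n$ and all pairs $\mb n_i+\mb n_j$. Connectivity then forces $x_b=x_{b'}$ for every exchangeable pair, so all $x_b$ equal a common value $x$.

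With all $x_b=x$, we get $\mb n_b^T(B_\beta-xI)\mb n_b=0$ for every cut normal. These normals include $\mb n_i$ and $\mb n_i+\mb n_j$, and quadrics vanishing on such vectors are zero. Hence $B_\beta=xI$. Comparing $r_b=x|\mb n_b|^2$ via \eqref{eq:rb} and \eqref{eq:normal-length} for singleton cuts and for two-element shores then shows that all $\beta_{ij}$ are equal.

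Next, $H_\beta$ is invariant under the symmetric group acting on the regular frame, so it is scalar. Equality in the trace AM--GM step of Proposition~\ref{prop:affine} makes $R^{1/2}H_\beta R^{1/2}$ scalar as well. Since $\det R=1$, this gives $R=I_n$, so $A$ is orthogonal. Because $B_\beta$ is scalar, $\M_\beta=I_n$, and therefore $M=A\in\SO(n)$ (as $\det M=1$).

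For the converse, $\beta=\mb1$ and $M\in\SO(n)$ give all $x_b$ equal, $R=I_n$ and $H_\beta$ scalar. Every inequality in the chain is then an equality.
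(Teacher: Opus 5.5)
Your proposal is correct and follows the paper's proof essentially step for step. Your Hölder inequality with exponents $3/2$ and $3$ is the paper's Jensen inequality for $x\mapsto x^{-1/2}$ (weights $p_I/\varphi$) in another form. Your equality chain is also the paper's: constant $X_I$, then constant $x_b$, then $B_\beta$ scalar by polarization, equal weights, $H_\beta$ scalar by symmetry, and $R=I_n$.

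The one step to tighten is the exchange argument. Connectivity of the basis graph holds in every matroid, so it is not what you need. You need every pair of cut normals to be linked by exchanges. This follows by comparing the singleton bases (any $n$ of the $\mb n_i$), which gives equal $x_{\br i}$. Then, for $i\in S$ and $j\in S^c$, use the basis $\{b\}\cup\{\br k:k\ne i,j\}$, obtained from $\{\br k:k\ne j\}$ by exchanging $\br i$ for $b$; it is a basis because $\mb n_b\equiv\mb n_i$ modulo the span of the others.
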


\begin{proof}
Let $\B$ be the $n$-element subsets of $\F$ whose normals are
independent.  For $I\in\B$, set
\begin{equation}\label{eq:calibrated-basis-data}
 p_I:=\det(\mb n_b:b\in I)^2
                 \prod_{b\in I}\frac{\gamma_b}{|\mb n_b|},
 \qquad
 \hat r_b:=\frac{r_b}{|\mb n_b|^2},
 \qquad
 \hat r_I:=\prod_{b\in I}\hat r_b.
\end{equation}
By Cauchy--Binet,
\begin{equation}\label{eq:scalar-basis-sums}
 \varphi=\sum_{I\in\B}p_I,
 \qquad
 \psi=d^{2n-3}\sum_{I\in\B}p_I\hat r_I.
\end{equation}
Consequently $\hat p_I:=p_I/\varphi$ are positive weights of sum
one, and
$\sum_I\hat p_I\hat r_I=\psi/(d^{2n-3}\varphi)$.

Put $T:=\sum_b\gamma_b r_b^{-1/2}\mb n_b\mb n_b^T$.
Equations \eqref{eq:canonical-data}--\eqref{eq:Hbeta} give
\[
 H_\beta=d^{-1/(2n)}B_\beta^{1/2}T B_\beta^{1/2},
 \qquad \det H_\beta=d^{1/2}\det T.
\]
Since $\gamma_b/\sqrt{r_b}
=(\gamma_b/|\mb n_b|)\hat r_b^{-1/2}$, Cauchy--Binet expands
$\det T$ as $\sum_{I\in\B}p_I\hat r_I^{-1/2}$.
Jensen's inequality for $x\mapsto x^{-1/2}$ therefore yields
\begin{align}
 \det H_\beta
 &=d^{1/2}\varphi
          \sum_{I\in\B}\hat p_I\hat r_I^{-1/2}\notag\\
 &\ge d^{1/2}\varphi
          \left(\sum_{I\in\B}\hat p_I\hat r_I\right)^{-1/2}
 =d^{n-1}\frac{\varphi^{3/2}}{\psi^{1/2}}.
                                                    \label{eq:detH-ac}
\end{align}
Together with \eqref{eq:volume} and \eqref{eq:surface-detH}, this proves
\eqref{eq:graphical-bound}.

Suppose equality holds.  In a series of small implications, we work
backwards from the equality in Jensen's inequality and equality in the
affine estimate to the equality of the zone parameters $\beta_e$
and the orthogonality of $M$.

\begin{enumerate} 
\item \emph{The constants $\hat r_I$ are all equal:} 
Each $p_I$ is positive, for $I\in\B$,
because it is a product of positive factors.
Equality in Jensen implies that the basis products $\hat r_I$ are all equal.
\item \emph{The constants $\hat r_b$ are all equal:}
First consider the singleton cuts $\br i$, with normals $\mb n_i$.
Every $n$ of these $n+1$ normals form a basis.  Call them \emph{singleton bases}.
Comparing the products
for the bases omitting $i$ and omitting $j$ gives
\(
\hat r_{\br i}=\hat r_{\br j}.
\)
Let $a>0$ be their common value. 
For any cut $b=\br{S\mid S^c}$, choose $i\in S$ and $j\in S^c$.
Using the choice $\mb n_b=\sum_{k\in S}\mb n_k$, we have a
basis $I=\{b\}\cup\{\br{k}\mid k\not\in\{i,j\}\}$:
\[
|\det(\mb n_b,\mb n_k:k\not\in\{i,j\})|=|\det(\mb n_k:k\ne j)|>0.
\]
Comparing this basis with a singleton basis $J$:
$\hat r_I=a^{n-1}\hat r_b = a^n=\hat r_J$.  
This gives $\hat r_b=a$ for all $b$.
\item \emph{$B_\beta$ is a scalar multiple of $I_n$:}
Set $D:=B_\beta-aI_n$.  The identity $\hat r_b=a$ says that
$\mb n_b^TD\mb n_b=0$ for every $b\in\F$.  
Polarization gives
\[
 2\mb n_i^TD\mb n_j
 =(\mb n_i+\mb n_j)^TD(\mb n_i+\mb n_j)
      -\mb n_i^TD\mb n_i-\mb n_j^TD\mb n_j=0.
\]
Thus all pairings of $D$ on a basis vanish, and $B_\beta=aI_n$.
\item \emph{The zone weights $\beta_{ij}$ are all equal:}
The individual zone weights can be recovered from
\begin{equation}\label{eq:beta-recovery}
 \mb n_i^TB_\beta\mb n_j=-\beta_{ij}\qquad(i\ne j).
\end{equation}
Indeed, in the defining sum for $B_\beta$, only edge $ij$ has a
nonzero pairing with both $\mb n_i$ and $\mb n_j$, and its two
pairings have opposite signs by \eqref{eq:zone-vector}.
Equation \eqref{eq:normal-gram} now gives
$\beta_{ij}=a(n+1)^{-(n-1)/n}$ for every edge, so all zone weights
are equal.
\item \emph{$H_\beta$ is scalar:}
Every permutation $\sigma$ of the regular frame is realized by an
orthogonal map $Q_\sigma$: the Gram matrix is unchanged, and the
frame spans $\R^n$ with its sole relation $\sum_i\mb n_i=0$.
This map permutes the cut-normal lines and preserves their facet
areas, since the weights are equal and the tree formulas are
invariant under vertex relabelling.  Therefore
$Q_\sigma H_\beta Q_\sigma^T=H_\beta$ for every $\sigma$.
It follows that $\mb n_i^TH_\beta\mb n_i$ has one common value
$\alpha$, and $\mb n_i^TH_\beta\mb n_j$ for $i\ne j$ has one common
value $\eta$.  Pairing with $\sum_j\mb n_j=0$ gives
$\alpha+n\eta=0$.  These pairings are consequently a common positive
multiple of the frame Gram matrix \eqref{eq:normal-gram}.
Because the frame spans $\R^n$, this proves $H_\beta=hI_n$ for some
$h>0$.
\item $M\in\SO(n)$:
Equality in \eqref{eq:trace-am-gm} now requires
$R^{1/2}H_\beta R^{1/2}=hR$ to be scalar.  Since $\det R=1$,
$R=I_n$.  Also $\M_\beta=I_n$, so $A=M$ and
$M^{-1}M^{-T}=R^2=I_n$.  Thus $M$ is orthogonal, and its determinant
one gives $M\in\SO(n)$.  
\end{enumerate}
Conversely, the regular-frame identities give
\[
 \sum_{i<j}\mathbf w_{ij}\mathbf w_{ij}^T
 =(n+1)^{(n-1)/n}I_n.
\]
Equal zone weights therefore make \(B_\beta\) scalar, so \(\mathring
M_\beta=I_n\) and all \(\hat r_b\) are equal. The symmetry argument
above makes \(H_\beta\) scalar. If \(M\in\mathrm{SO}(n)\), then
\(R=I_n\), and equality holds in Jensen's inequality and in both
affine estimates.
\end{proof}

\section{Three-dimensional parallelohedra}
\label{sec:three-dimensional}

We now set $n=3$.  The general results handle the truncated-octahedral
and dodecahedral families.  Only the scalar inequality in
Appendix~\ref{sec:beta-certificate} and the two prism cases remain.

\subsection{The six-zone model}

For the normalized regular frame we may take
\[
 \begin{aligned}
 \mb n_1&=2^{-2/3}(1,1,1),&
 \mb n_2&=2^{-2/3}(1,-1,-1),\\
 \mb n_3&=2^{-2/3}(-1,1,-1),&
 \mb n_4&=2^{-2/3}(-1,-1,1).
 \end{aligned}
\]
Its sum is zero and $\det(\mb n_1,\mb n_2,\mb n_3)=1$.
The reciprocal generators in \eqref{eq:zone-vector} satisfy
\[
 \mb w_{ij}=\pm(\mb n_k\times\mb n_\ell),
 \qquad \{i,j,k,\ell\}=[4].
\]
Thus \eqref{eq:param} is the standard six-zone parametrization, with
complementary edge labels \cite[\S2.1]{Langi2022}; compare the
zone-rescaling lemma \cite[Lemma~1]{McMullen1975}.  Up to translation,
every parallelohedron of truncated-octahedral type has the form
$\Z(\beta,M)$; this is essentially \cite[\S2.1]{Langi2022}, except
that L\'angi starts with any four nonzero vectors $\mb v_i$ spanning
$\R^3$ and satisfying $\sum_i\mb v_i=0$.  Choose an invertible matrix
$A$ with $\mb v_i=A\mb n_i$.  Then
\[
 \mb v_i\times\mb v_j=(\adj A)^T(\mb n_i\times\mb n_j).
\]
Since $\det((\adj A)^T)=(\det A)^2>0$, its positive scalar determinant
factor can be absorbed into the zone weights, leaving the matrix in
$\mathrm{SL}(3,\R)$.
The rhombic and elongated cases are obtained by deleting zones.
Figure~\ref{fig:1} illustrates the six parallel classes.

\begin{figure}[ht]
\centering
\begin{tikzpicture}[scale=2]
  \def\off{0.5}                 
  \pgfmathsetmacro{\rt}{sqrt(3)/2}

  \coordinate (A) at (1,0);
  \coordinate (B) at (0.5,\rt);
  \coordinate (C) at (-0.5,\rt);
  \coordinate (D) at (-1,0);
  \coordinate (E) at (-0.5,-\rt);
  \coordinate (F) at (0.5,-\rt);
  \coordinate (A1) at ($(A)+(\off*\rt,\off*0.5)$);
  \coordinate (B1) at ($(B)+(\off*\rt,\off*0.5)$);
  \coordinate (B2) at ($(B)+(0,\off)$);
  \coordinate (C2) at ($(C)+(0,\off)$);
  \coordinate (C1) at ($(C)+(-\off*\rt,\off*0.5)$);
  \coordinate (D1) at ($(D)+(-\off*\rt,\off*0.5)$);
  \coordinate (D2) at ($(D1)+(0,-\off)$);
  \coordinate (E2) at ($(D2)+(0.5,-\rt)$);
  \coordinate (E1) at ($(E)+(0,-\off)$);
  \coordinate (F1) at ($(F)+(0,-\off)$);
  \coordinate (F2) at ($(F1)+(\off*\rt,\off*0.5)$);
  \coordinate (A2) at ($(F2)+(0.5,\rt)$);


  \draw[thick,black] (A1) --  node[midway,anchor=south west,black]{12} (B1);
  \draw[thick,black] (A) -- (B);
  \draw[thick,black] (E) -- (D);
   \draw[thick,black] (E2) -- node[midway,anchor=north east,black]{12} (D2);
   
    \draw[thick,cyan] (A2) -- node[midway,anchor=north west,black]{13} (F2);
  \draw[thick,cyan] (A) -- (F);
  \draw[thick,cyan] (C) -- (D);
   \draw[thick,cyan] (C1) -- node[midway,anchor=south east,black]{13} (D1);

      \draw[thick,red] (C2) -- node[midway,anchor=south,black]{23} (B2);
  \draw[thick,red] (C) -- (B);
  \draw[thick,red] (E) -- (F);
   \draw[thick,red] (E1) -- node[midway,anchor=north,black]{23} (F1);
   
  \draw[thick,green] (C1) -- node[midway,anchor=south east,black]{34} (C2);
  \draw[thick,green] (B) -- (B1);
  \draw[thick,green] (A) -- (A1);
   \draw[thick,green] (F1) -- node[midway,anchor=north west,black]{34} (F2);
   
  \draw[thick,orange,dashed] (A1) -- node[midway,anchor=west,black]{14} (A2);
  \draw[thick,orange,dashed] (F) -- (F1);
  \draw[thick,orange,dashed] (E) -- (E1);
   \draw[thick,orange,dashed] (D1) -- node[midway,anchor=east,black]{14} (D2);
   
  \draw[thick,yellow] (B1) -- node[midway,anchor=south west,black] {24} (B2);
  \draw[thick,yellow] (C) -- (C1);
  \draw[thick,yellow] (D) -- (D1);
   \draw[thick,yellow] (E1) -- node[midway,anchor=north east,black] {24} (E2);
   \node at ($(B)!0.5!(C2)$) {$1$};
   \node at ($(A)!0.5!(F2)$) {$2$};
   \node at ($(D)!0.5!(E2)$){$3$};
   \node at (0,0) {$\bar4$};
\end{tikzpicture}
\caption{The six zones of a truncated octahedron, labelled by their
 directions $\R M\mb w_{ij}$.  Each zone contains six edges; hidden
 edges are omitted.  In the symmetric model, the outward normals to
 facets $1,2,3,4$ form a regular tetrahedral frame.  Facet $4$ is
 opposite~$\bar4$.}
\label{fig:1}
\end{figure}

The seven facet pairs consist of the four singleton cuts
$\F_{\rmh}=\{\br1,\br2,\br3,\br4\}$ and the three two--two cuts
$\F_{\rms}=\{\br{12\mid34},\br{13\mid24},\br{14\mid23}\}$.  The
subscripts \(\mathrm h\) and \(\mathrm s\) refer to the hexagonal and
square facets of the symmetric reference polyhedron; the square-type
facets of a general member are parallelograms.  For
$\{i,j,k,\ell\}=[4]$, the tree formulas become
\begin{equation}\label{eq:three-gamma}
 \gamma_{\br i}
 =\beta_{jk}\beta_{j\ell}+\beta_{jk}\beta_{k\ell}
                         +\beta_{j\ell}\beta_{k\ell},
 \qquad
 \gamma_{\br{ij\mid k\ell}}=\beta_{ij}\beta_{k\ell}.
\end{equation}
For $\beta=\mb1$ there are $2^4$ spanning trees, so $d=16$, and
\begin{equation}\label{eq:Arch}
 \gamma_b=\begin{cases}3,&b\in\F_{\rmh},\\1,&b\in\F_{\rms},\end{cases}
 \qquad
 |\mb n_b|=\begin{cases}2^{-2/3}\sqrt3,&b\in\F_{\rmh},\\
                         2^{-2/3}2,&b\in\F_{\rms}.
             \end{cases}
\end{equation}
Consequently
\[
 \area(\partial\Z_{\mathrm p,3})
 =12\,(1+2\sqrt3)2^{-2/3},
 \qquad
 \iota_3(\Z_{\mathrm p,3})=\frac{3(1+2\sqrt3)}{4^{2/3}}.
\]
This gives
\begin{equation}\label{eq:kappa}
 \iota_3(\Z_{\mathrm p,3})=6\kappa^{1/3},\qquad  \kappa:=\kappa_3=\frac{(1+2\sqrt3)^3}{128}.
\end{equation}

\subsection{The truncated-octahedral case}

The remaining scalar estimate, proved in
Appendix~\ref{sec:beta-certificate}, is
\begin{equation}\label{eq:direct-zone-weight}
 \varphi(\beta)^3\ge\kappa^2\psi(\beta)
 \qquad(\beta\in\R_{\ge0}^{\E},\ n=3).
\end{equation}

\begin{theorem}[truncated-octahedral case]\label{thm:main}
For every positive zone vector $\beta$ and every $M\in\SL(3,\R)$,
\[
 \iota_3(\Z(\beta,M))\ge\frac{3(1+2\sqrt3)}{4^{2/3}}.
\]
Equality holds if and only if all six weights are equal and
$M\in\SO(3)$.
\end{theorem}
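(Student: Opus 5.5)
The plan is to combine Theorem~\ref{thm:graphical} at $n=3$ with the scalar estimate \eqref{eq:direct-zone-weight}. For positive $\beta$ and $M\in\SL(3,\R)$, Theorem~\ref{thm:graphical} gives
\[
 \iota_3(\Z(\beta,M))\ge 6\left(\frac{\varphi(\beta)^3}{\psi(\beta)}\right)^{1/6}.
\]
Since $\psi(\beta)>0$ for positive $\beta$, inequality \eqref{eq:direct-zone-weight} gives $\varphi^3/\psi\ge\kappa^2$. Hence $(\varphi^3/\psi)^{1/6}\ge\kappa^{1/3}$, and the right side is at least $6\kappa^{1/3}$. By \eqref{eq:kappa} this equals $\iota_3(\Z_{\mathrm p,3})=3(1+2\sqrt3)/4^{2/3}$, which is the inequality.

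For the equality case, suppose $\iota_3(\Z(\beta,M))=6\kappa^{1/3}$. The chain above then collapses, so equality must hold in \eqref{eq:graphical-bound} in particular. The equality statement of Theorem~\ref{thm:graphical} forces all six zone weights to be equal and $M\in\SO(3)$. Nothing about equality in \eqref{eq:direct-zone-weight} is needed, because the analytic equality condition is already exhausted at this step.

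For the converse, take $\beta=c\mb1$ with $c>0$ and $M\in\SO(3)$. By homogeneity, $\Z(c\mb1,M)$ is a rotation of a dilate of $\Z_{\mathrm p,3}$. Its isoperimetric quotient is therefore the value computed in \eqref{eq:kappa}. One can also see this from the converse part of Theorem~\ref{thm:graphical}: it gives equality in \eqref{eq:graphical-bound}, and the right side there is scale-invariant, equal to $6\kappa^{1/3}$ at $\beta=\mb1$.

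The only real obstacle is the polynomial inequality \eqref{eq:direct-zone-weight}. It is homogeneous of degree $18$ in six variables and is deferred to the appendix's exact Bernstein certificate. Given that input, the theorem is a two-line deduction. The one point to check is that $\psi>0$ on the open orthant, so that the division is legitimate. This holds because $\psi$ is $d^{3}$ times the determinant of a positive definite form, whose weighted normals span $\R^3$.
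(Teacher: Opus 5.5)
Your proposal is correct and follows the paper's proof. The paper likewise chains the affine reduction and the Jensen bound of Theorem~\ref{thm:graphical} with \eqref{eq:direct-zone-weight}, and it derives the equality case solely from the equality statement of Theorem~\ref{thm:graphical}; your explicit check that $\psi>0$ on the open orthant is a detail the paper leaves implicit.
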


\begin{proof}
The affine reduction, Jensen's inequality, and
\eqref{eq:direct-zone-weight} give
\begin{equation}\label{eq:chain}
 \area(\partial\Z(\beta,M))
 \ge6(\det H_\beta)^{1/3}
 \ge6d^{2/3}\left(\frac{\varphi^3}{\psi}\right)^{1/6}
 \ge6\kappa^{1/3}d^{2/3}.
\end{equation}
Since $d=\vol_3(\Z(\beta,M))$, this is the asserted bound.
Equality requires equality in Theorem~\ref{thm:graphical}, which
forces equal weights and orthogonal $M$.  Conversely those conditions
give $\Z_{\mathrm p,3}$ up to similarity and equality.
\end{proof}

\subsection{The two dodecahedral cases}

\begin{corollary}\label{cor:dodecahedra}
Every rhombic-dodecahedral parallelohedron satisfies
\[
 \iota_3(\Z)\ge3\cdot2^{5/6},
\]
with equality exactly for Kepler's regular rhombic dodecahedron.
Every genuine elongated dodecahedron satisfies the strict inequality
$\iota_3(\Z)>3\cdot2^{5/6}$; the constant is sharp on its closure.
\end{corollary}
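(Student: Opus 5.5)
The plan is to specialize Theorems~\ref{thm:rhombic} and~\ref{thm:elongated} to $n=3$ and compute the constant. For $n=3$ we have $c_3=\frac{3}{\sqrt2}\,4^{1/6}=3\cdot2^{-1/2}\cdot2^{1/3}=3\cdot2^{-1/6}$, so $2c_3=3\cdot2^{5/6}$. The remaining work is to check that every rhombic-dodecahedral and every elongated-dodecahedral parallelohedron lies in the families treated by those theorems.

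\textbf{Rhombic dodecahedra.} A rhombic-dodecahedral parallelohedron is a zonotope with four zones, any three of which are independent. By the remark after Theorem~\ref{thm:rhombic}, I orient the generators so that their unique linear dependence has positive coefficients. Absorbing these coefficients gives a balanced $4$-frame with positive weights $\beta_i$. Equivalently, this is the six-zone model $\Z(\beta,M)$ with the zones of a triangle deleted. Theorem~\ref{thm:rhombic} then gives $\iota_3(\Z)\ge 3\cdot2^{5/6}$, with equality exactly for a regular frame with equal weights. That equality case is Kepler's regular rhombic dodecahedron, since $\Z_{\mathrm r,3}$ is that polyhedron.

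\textbf{Elongated dodecahedra.} A genuine elongated dodecahedron has five zones. Four of them form a rhombic configuration $\mb u_1,\dots,\mb u_4$ as above. The fifth zone is parallel to $\mb u_S$ for a two--two split $S\mid S^c$; this is the standard description, and it is also the six-zone model with one zone deleted. Concretely, if the edge $k\ell$ is deleted from $\K_4$, the remaining five reciprocal generators are those of a frame together with a subset-sum direction. The step I expect to need the most care is verifying this identification, including the orientation signs: that after deleting one zone of $\Z(\beta,M)$, the four zones $\mb w_{ik},\mb w_{i\ell},\mb w_{jk},\mb w_{j\ell}$ form a balanced frame up to signs, and that $\mb w_{ij}$ (or its negative) is the sum of two of them across the split. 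I would check this directly from \eqref{eq:zone-vector} by pairing with the $\mb n_m$. With that in place, Theorem~\ref{thm:elongated}, applied with $n=3$ and $|S|=|S^c|=2$, gives the bound. It gives strict inequality since a genuine elongated dodecahedron has $t>0$, and sharpness on the closure as $t\downarrow0$.

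Finally, I need that the frame weights $\beta_i$ are positive and $t>0$ for a genuine polyhedron of each type. Zero weights would drop zones and change the combinatorial type. This finishes both statements.
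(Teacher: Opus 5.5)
Your proposal is correct and follows the paper's proof. The paper also places both classes in the six-zone model ($\beta_{34}=0$, with $\beta_{12}=0$ or $\beta_{12}>0$), orients the four-cycle of zones as $(M\mb w_{13},-M\mb w_{23},M\mb w_{24},-M\mb w_{14})$ to get a balanced frame with $M\mb w_{12}=\mb u_1+\mb u_2$, and then applies Theorems~\ref{thm:rhombic} and~\ref{thm:elongated} with $2c_3=3\cdot2^{5/6}$. One slip in your aside: the rhombic class is the six-zone model with two \emph{opposite} zones deleted, which leaves the four-cycle; deleting a triangle's zones leaves a star, i.e.\ a parallelepiped. Your rhombic argument actually runs through the remark after Theorem~\ref{thm:rhombic}, so this slip does not affect the proof.
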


\begin{proof}
In the six-zone model, these classes have, after relabelling,
\[
 \beta_{34}=0,\qquad \beta_{12}\ge0,\qquad
 \beta_{13},\beta_{14},\beta_{23},\beta_{24}>0,
\]
with $\beta_{12}=0$ for the rhombic class and $\beta_{12}>0$ for the
elongated class \cite[\S2.1]{Langi2022}.
Orient the four core directions around a cycle:
\[
 (\mb u_1,\mb u_2,\mb u_3,\mb u_4)
 =(M\mb w_{13},-M\mb w_{23},M\mb w_{24},-M\mb w_{14}).
\]
They sum to zero, and every triple has determinant of absolute value
one, since it is a spanning tree of $\K_4$.
The remaining direction is
$M\mb w_{12}=\mb u_1+\mb u_2$.
Thus Theorems~\ref{thm:rhombic} and \ref{thm:elongated} apply with
$S=\{1,2\}$, $t=\beta_{12}$, and the corresponding four core weights.
Their equality and sharpness statements give the result, since
$2c_3=3\cdot2^{5/6}$.
\end{proof}

\subsection{Prisms}
\label{subsec:prism-cases}

The two remaining Fedorov classes are prisms over centrally symmetric
polygons.  A parallelepiped is a prism over a parallelogram.  The
hexagonal-prism type has three generating segments in a common plane
and one transverse segment.  The zonotope is therefore the Minkowski
sum of a centrally symmetric planar hexagon and an interval, hence an
actual prism.  We treat both classes by the same sharp prism
inequality.

Steiner solved the prism problem~\cite{Steiner1842}.  P\'olya treats
the right-prism optimization and the comparison with oblique
prisms~\cite[pp.~138--140]{polya1990mathematics}; see also Fejes
T\'oth's discussion of Steiner's optimality
condition~\cite[p.~134]{toth2023lagerungen}.  The isoperimetric
inequality for polygons goes back to Zenodorus; see also
\cite{FejesToth1964}.

\begin{lemma}[sharp inequality for polygonal prisms]
\label{lem:polygonal-prism}
Let $\Z_0$ be a convex $m$-gon in a plane in $\R^3$, and let
$\Z=\Z_0+[-\mb u/2,\mb u/2]$ be a nondegenerate, possibly oblique
prism.  Then
\begin{equation}\label{eq:sharp-polygonal-prism}
 \iota_3(\Z)\ge6\left(\frac{m\tan(\pi/m)}4\right)^{1/3}.
\end{equation}
Equality holds if and only if the base is regular, the prism is
right, and its height is twice the inradius of the base.
\end{lemma}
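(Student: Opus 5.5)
Write $A$ for the area of the base $\Z_0$, $P$ for its perimeter, and $h$ for the height of the prism, that is, the component of $\mb u$ normal to the base plane.

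\textbf{Step 1: reduce to a right prism.} The volume is $\vol_3(\Z)=Ah$, independent of the tangential component of $\mb u$. The two base facets contribute $2A$ to the surface area. The lateral facet over a base edge of length $\ell_k$ is a parallelogram with area $\ell_k|\mb u|\,|\sin\theta_k|$, where $\theta_k$ is the angle between $\mb u$ and that edge. This area is at least $\ell_k h$, because the distance from the opposite side of the parallelogram to the edge line is at least $h$. Hence
\[
\area(\partial\Z)\ge 2A+Ph.
\]
Equality holds exactly when $\mb u$ is orthogonal to every edge direction of $\Z_0$. Two of these directions are independent, so equality forces $\mb u$ to be normal to the base, and the prism is right.

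\textbf{Step 2: polygonal isoperimetry.} Use the classical discrete isoperimetric inequality for convex $m$-gons,
\[
P^2\ge 4m\tan(\pi/m)\,A,
\]
with equality only for the regular $m$-gon. Put $t=m\tan(\pi/m)$, so that $P\ge 2\sqrt{tA}$.

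\textbf{Step 3: optimize the height.} With volume fixed, set $V=Ah$. We need to bound
\[
2A+2\sqrt{tA}\,\frac{V}{A}=2A+2\sqrt t\,V A^{-1/2}
\]
from below. Write it as $2A+\sqrt t\,VA^{-1/2}+\sqrt t\,VA^{-1/2}$ and apply three-term AM--GM:
\[
2A+2\sqrt t\,VA^{-1/2}\ge 3\bigl(2A\cdot tV^2A^{-1}\bigr)^{1/3}=3(2t)^{1/3}V^{2/3}.
\]
Since $3(2t)^{1/3}=6(t/4)^{1/3}$, this is exactly \eqref{eq:sharp-polygonal-prism}. Equality in AM--GM requires $2A=\sqrt t\,VA^{-1/2}$, that is, $h=V/A=2\sqrt{A/t}$. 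For the regular $m$-gon with inradius $r$ we have $A=tr^2$, so $\sqrt{A/t}=r$ and the condition reads $h=2r$.

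\textbf{Equality and the main obstacle.} Equality in the whole chain therefore forces three things: the prism is right (Step 1), the base is regular (Step 2), and the height is twice the inradius (Step 3). Conversely, under these three conditions every step is an equality. The only substantive input is the sharp polygonal isoperimetric inequality with its equality case. I would cite it from the Zenodorus and Fejes T\'oth references rather than reprove it; if a proof were needed, the standard route is Lhuilier's inequality via the circumscribed regular polygon. The remaining steps are elementary, the one point needing care being the strictness argument in Step 1.
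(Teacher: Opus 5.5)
Your proposal is correct and follows the paper's proof essentially step for step: the lateral-face bound $\area(\partial\Z)\ge 2A+Ph$, with equality exactly for right prisms, then the sharp polygonal isoperimetric inequality, then a one-variable optimization. The only difference is cosmetic: you finish with a three-term AM--GM at fixed volume, whereas the paper substitutes $x=hp/a$ and minimizes $(2+x)x^{-2/3}$ by calculus at $x=4$, and both give the same equality condition $h=2\sqrt{A/t}=2r$.
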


\begin{proof}
Let $a$ and $p$ be the area and perimeter of the base $\Z_0$, and
let $h$ be the distance between the two base planes.  Each lateral
parallelogram has area at least $h$ times the length of its base edge.
Consequently
\[
 \area(\partial\Z)\ge2a+hp,\qquad \vol_3(\Z)=ah.
\]
Equality for an individual lateral face requires the component of
$\mb u$ parallel to the base plane to be parallel to that base edge.
Equality for all lateral faces therefore requires this component to
vanish, since a nondegenerate polygon has nonparallel edges.  Thus
equality in the area estimate holds exactly when the prism is right.

Put $x=hp/a$ and $a_m:=m\tan(\pi/m)$, the area of a regular
$m$-gon with inradius one.  The polygonal isoperimetric inequality
gives $p^2/a\ge4a_m$, with equality exactly for a regular $m$-gon.
Hence
\[
 \iota_3(\Z)
 \ge(2+x)x^{-2/3}\left(\frac{p^2}{a}\right)^{1/3}
 \ge(2+x)x^{-2/3}(4a_m)^{1/3}.
\]
The function of $x>0$ in this expression has derivative
\[
 \frac{d}{dx}\bigl((2+x)x^{-2/3}\bigr)
 =\frac{x-4}{3x^{5/3}},
\]
so its unique minimum is attained at $x=4$.  Substitution gives
$\iota_3(\Z)\ge6(a_m/4)^{1/3}$.
Equality requires a right prism, a regular base, and $hp=4a$.
For a regular polygon with inradius $r$, $a=rp/2$, so the last
condition is $h=2r$.  These conditions are also sufficient.
\end{proof}

\begin{proposition}[two prism classes]
\label{prop:parallelepipeds-and-hexagonal-prisms}
The following sharp inequalities hold.
\begin{enumerate}[label=\textup{(\roman*)}]
\item Every parallelepiped $\Z$ satisfies
\begin{equation}\label{eq:parallelepiped-bound}
 \iota_3(\Z)\ge6.
\end{equation}
Equality holds if and only if $\Z$ is a cube.

\item Every parallelohedron $\Z$ of hexagonal-prism type satisfies
\begin{equation}\label{eq:hexagonal-prism-bound}
 \iota_3(\Z)\ge3\cdot2^{2/3}3^{1/6}.
\end{equation}
Equality holds if and only if $\Z$ is a right regular hexagonal prism
whose height is twice the inradius of its base.
\end{enumerate}
\end{proposition}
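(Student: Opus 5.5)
The plan is to deduce both parts directly from Lemma~\ref{lem:polygonal-prism}, once each class has been written as a prism over a centrally symmetric polygon with the right number of sides.

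For (i), a parallelepiped $\Z=\sum_{i=1}^3[-\mb v_i/2,\mb v_i/2]$ is the prism $\Z_0+[-\mb v_3/2,\mb v_3/2]$ over the parallelogram $\Z_0$ spanned by $\mb v_1,\mb v_2$. So the lemma applies with $m=4$. Since $\tan(\pi/4)=1$, the bound is $6(4/4)^{1/3}=6$. By the lemma, equality holds exactly when the base is a square, the prism is right, and its height equals twice the inradius of the square, that is, its side length. This is precisely a cube. Conversely, a cube gives $\iota_3=6\ell^2/\ell^2=6$.

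For (ii), the paper has already observed that a parallelohedron of hexagonal-prism type is $\Z_0+[-\mb u/2,\mb u/2]$. Here $\Z_0$ is the planar zonotope generated by three pairwise nonparallel coplanar segments, which is a centrally symmetric hexagon with exactly six edges, and $\mb u$ is transverse to that plane. The lemma with $m=6$ then gives
\[
 \iota_3(\Z)\ge6\left(\frac{6\tan(\pi/6)}{4}\right)^{1/3}
 =6\left(\frac{\sqrt3}{2}\right)^{1/3}.
\]
A short exponent computation rewrites this in the stated form: $6\cdot3^{1/6}2^{-1/3}=3\cdot2^{2/3}3^{1/6}$. The lemma's equality statement translates verbatim into the one claimed here: a right prism over a regular hexagon whose height is twice the inradius of its base.

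The only step that needs any care is checking that the lemma's hypotheses are satisfied and that its equality case really stays inside the class. One point is that the hexagonal base is a genuine hexagon, so that $m=6$ rather than a degenerate parallelogram. This holds because the three planar zones of this Fedorov type are pairwise nonparallel. The other point is that the extremal right regular hexagonal prism is itself of hexagonal-prism type, so the bound is sharp within the class. It is the Minkowski sum of three coplanar segments at $120^\circ$ and one orthogonal segment, so it is. No separate obstacle arises beyond this bookkeeping; all the analytic content sits in Lemma~\ref{lem:polygonal-prism}.
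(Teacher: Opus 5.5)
Your proposal is correct and follows the paper's proof: it specializes Lemma~\ref{lem:polygonal-prism} to $m=4$ and $m=6$, simplifies the constants to $6$ and $3\cdot2^{2/3}3^{1/6}$, and reads off the equality cases (the cube, and the right regular hexagonal prism with height twice the inradius) exactly as the paper does. Your extra bookkeeping adds two checks the paper leaves implicit: that the base is a genuine hexagon, and that the extremal prism lies in the class.
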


\begin{proof}
Take $m=4$ and $m=6$ in Lemma~\ref{lem:polygonal-prism}.
For $m=4$, the regular base is a square and twice its inradius is
its side length, so the equality prism is a cube.
For $m=6$, the sharp constant simplifies to
$3\cdot2^{2/3}3^{1/6}$, and the equality case is the stated right
regular hexagonal prism.
\end{proof}

\subsection{Completion of the proof}
\label{sec:completion}

\begin{proof}[Proof of Theorem~\ref{thm:global-main}]
The quotient is invariant under translations, rotations, and positive
homotheties.  Fedorov's classification and the preceding results give
\begin{center}
\begin{tabular}{@{}lll@{}}
\toprule
Fedorov type & bound for $\iota_3(\Z)$ & reference\\
\midrule
truncated octahedron & $\ge3(1+2\sqrt3)/4^{2/3}$ &
 Theorem~\ref{thm:main}\\
elongated dodecahedron & $>3\cdot2^{5/6}$ &
 Corollary~\ref{cor:dodecahedra}\\
rhombic dodecahedron & $\ge3\cdot2^{5/6}$ &
 Corollary~\ref{cor:dodecahedra}\\
hexagonal prism & $\ge3\cdot2^{2/3}3^{1/6}$ &
 Proposition~\ref{prop:parallelepipeds-and-hexagonal-prisms}\\
parallelepiped & $\ge6$ &
 Proposition~\ref{prop:parallelepipeds-and-hexagonal-prisms}\\
\bottomrule
\end{tabular}
\end{center}
The four non-truncated types have constants strictly greater than
$\iota_3(\Z_{\mathrm p,3})$.  In the truncated-octahedral type,
Theorem~\ref{thm:main} gives equality precisely for the Archimedean
similarity class.
\end{proof}

\appendix
\section{Exact Bernstein certificate in dimension three}
\label{sec:beta-certificate}

Throughout this appendix $n=3$.  We prove \eqref{eq:direct-zone-weight}
by an exact tensor-product Bernstein certificate for
\begin{equation}\label{eq:app-f-beta}
 f(\beta):=f_3(\beta)=\varphi(\beta)^3-\kappa^2\psi(\beta).
\end{equation}
All certificate arithmetic lies in $\Q(\sqrt3)$.  Only nonnegativity
is required: the equality case of the geometric theorem was already
settled analytically in Theorem~\ref{thm:graphical}.

\subsection{Reconstructing the polynomial}

The seven normal lines in Section~\ref{sec:three-dimensional} realize
the rank-three non-Fano matroid \cite{Oxley}.  Their six dependent
triples are
\[
 \{\br i,\br j,\br{ij\mid k\ell}\},
 \qquad \{i,j,k,\ell\}=[4],\quad ij\in\E.
\]
The other $29$ triples form the set $\B$ of normal bases.
Of these, $28$ have squared determinant $1$, while the triple
$\F_{\rms}$ has squared determinant $4$.  These values follow by
direct computation from the normalized frame.

The graph formulas give $d=\tau(\beta)$, the coefficients
\eqref{eq:three-gamma}, and the cut weights \eqref{eq:rb}.
For exact reconstruction, put
\[
 \varepsilon_I:=\prod_{b\in I}|\mb n_b|
       =2(\sqrt3/2)^{|I\cap\F_{\rmh}|}\qquad(I\in\B).
\]
The calibrated data \eqref{eq:calibrated-basis-data} can then be
computed directly in $\Q(\sqrt3)$ as
\begin{equation}\label{eq:app-source-data}
 p_I=\frac{\det(\mb n_b:b\in I)^2}{\varepsilon_I}
                         \prod_{b\in I}\gamma_b,
 \qquad
 \hat r_I=\frac{\prod_{b\in I}r_b}{\varepsilon_I^2}.
\end{equation}
Thus \eqref{eq:scalar-basis-sums} specializes to
\begin{equation}\label{eq:app-scalars}
 \varphi=\sum_{I\in\B}p_I,
 \qquad \psi=d^3\sum_{I\in\B}p_I\hat r_I.
\end{equation}
The degrees of $p_I$ and $\hat r_I$ are $6$ and $3$;
those of $\varphi$ and $\psi$ are $6$ and $18$.
In particular $f\in\Q(\sqrt3)[\beta]$ is homogeneous of degree $18$.
No expanded polynomial is needed as input to the verifier.

\begin{theorem}[direct zone-weight certificate]\label{thm:beta-certificate}
For every nonnegative zone vector $\beta\in\R_{\ge0}^{\E}$,
\begin{equation}\label{eq:app-f-nonnegative}
 f(\beta)\ge0.
\end{equation}
\end{theorem}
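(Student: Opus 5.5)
Since $f$ is homogeneous of degree $18$, it suffices to prove $f\ge0$ on the standard simplex $\Delta=\{\beta\in\R_{\ge0}^{\E}:\sum_e\beta_e=1\}$, or equivalently on the unit cube face structure. I would use the boxes $\{\beta:0\le\beta_e\le\beta_{e_0}\}$, one for each choice of the largest coordinate $e_0$. After dehomogenizing by setting $\beta_{e_0}=1$, this covers the nonnegative orthant by six copies of $[0,1]^5$. The symmetry of $\K_4$ acts transitively on edges and preserves $\varphi$ and $\psi$, because permuting the frame is orthogonal and maps cut normals to cut normals with the same lengths and tree coefficients. So only one box is needed, say $\beta_{12}=1$ and the other five coordinates in $[0,1]$. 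The residual symmetry fixing edge $12$, of order $4$, could shrink the box further, but that is optional.

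The polynomial $g(x)=f(1,x)$ on $[0,1]^5$ is built exactly in $\Q(\sqrt3)[x]$ from \eqref{eq:app-source-data}--\eqref{eq:app-scalars}. I would use the $29$ bases, the coefficients \eqref{eq:three-gamma}, the cut weights \eqref{eq:rb}, and $d=\tau(\beta)$. Then I would convert $g$ to the tensor-product Bernstein basis of multidegree at most $18$ in each variable. If every Bernstein coefficient is nonnegative, then $g\ge0$, since each Bernstein basis polynomial is nonnegative on the box. Sign tests in $\Q(\sqrt3)$ are exact: decide the sign of $a+b\sqrt3$ by comparing $a^2$ with $3b^2$ together with the signs of $a$ and $b$.

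The main obstacle is the equality point $\beta=\mb1$, which lies on the boundary $x=\mb1$ of the box and where $f=0$. Bernstein coefficients cannot all be strictly positive on a box containing a zero, and a subbox around it carries only the corner coefficient $0$. Nonnegativity still requires the coefficients adjacent to that corner to be nonnegative. This corresponds to $f$ being nondecreasing into the box from the corner to first order, which should follow from $\mb1$ being a minimum with vanishing gradient on the simplex. Second-order positivity of the Hessian, consistent with the strict equality analysis in Theorem~\ref{thm:graphical}, should then make neighboring coefficients positive after enough subdivision. There are also possible zeros on the boundary where some $\beta_e=0$, where $\varphi$ and $\psi$ may vanish together (degenerate zonotopes). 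I expect these faces to need separate treatment: either the Bernstein coefficients there are $\ge0$ exactly, or one factors out a common power of the vanishing coordinates first.

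Concretely, I would subdivide $[0,1]^5$ adaptively by bisection, stopping when all coefficients on a subbox are $\ge0$. Near $\mb1$ I would use subboxes with a corner at $\mb1$ and check that the corner coefficient is $0$ and all others are $\ge0$. If that fails, I would first change variables $x_e=1-y_e$ and verify that the quadratic part at $y=0$ is positive definite on the relevant cone before subdividing. The proof then consists of the finite list of subboxes with their exact coefficient checks, together with the symmetry and homogeneity reductions above.
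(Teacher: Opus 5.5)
Your reductions (homogeneity, edge transitivity, dehomogenizing to $g$ on $[0,1]^5$, exact sign tests in $\Q(\sqrt3)$) are the ones the paper uses. Your treatment of the zero at $\mb1$, however, has a genuine gap: the mechanism you propose there cannot succeed under any subdivision. Every subbox $\prod_e[1-h_e,1]$ of your domain that contains $\mb1$ has it as a corner. We have $g(\mb1)=0$ and $\nabla g(\mb1)=0$, because $\nabla f(\mb1)$ is $S_4$-invariant, hence a multiple of $\mb1$, and Euler's relation gives $\mb1\cdot\nabla f(\mb1)=18f(\mb1)=0$. Expand $g(\mb1-hz)$ for $z\in[0,1]^5$, and let $m_e$ be the Bernstein degree in coordinate $e$. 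The tensor-product coefficient one step from the corner in each of two coordinates $e\ne e'$ is then exactly $h_eh_{e'}\,\partial_e\partial_{e'}g(\mb1)/(m_em_{e'})$, for every box size and every degree elevation. So you need all mixed second partials of $g$ at $\mb1$ to be nonnegative, and they are not.

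By $S_4$-invariance the Hessian of $f$ at $\mb1$ has the form $\alpha I+aA+oO$, where $A$ and $O$ are the incidence matrices of edge pairs sharing a vertex and of opposite edge pairs. Differentiating Euler's relation gives $\nabla^2f(\mb1)\mb1=17\nabla f(\mb1)=0$, i.e.\ $\alpha+4a+o=0$. Hence the eigenvalue on zero-sum vectors that are constant on opposite pairs is $\alpha-2a+o=-6a$. The second-order positivity you rely on therefore forces $a<0$; for example $\partial_{\beta_{13}}\partial_{\beta_{14}}g(\mb1)<0$ in your normalization $\beta_{12}=1$. That eigenvalue is indeed positive. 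Take $\beta_{12}=\beta_{34}=x$, $\beta_{13}=\beta_{24}=y$, $\beta_{14}=\beta_{23}=z$ with $x+y+z=3$. Up to third-order terms one finds $\log(\varphi^3/\psi)=\log\kappa^2+\frac{215\sqrt3-366}{242}\sum_{w\in\{x,y,z\}}(w-1)^2$. Your fallback $x=\mb1-y$ only reflects the same box. Checking the quadratic part on the cone and then subdividing still runs into this negative coefficient.

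The missing idea is to decouple the quadratic behaviour at $\mb1$ from the box geometry. The paper blows up the corner. It sets $z=\mb1-\beta$, takes a largest deviation $z_j=t\in[0,3/4]$, and writes $z_i=tu_i$ with $u_i\in[0,1]$. It then factors $g(\mb1-tu)=t^2h_j(t,u)$ and certifies the five polynomials $h_j$ by Bernstein coefficients. At $t=0$ this only asks the quadratic form to be nonnegative on the orthant. That is a copositivity condition, compatible with negative off-diagonal Hessian entries. An explicit Taylor-remainder estimate on a small cube at $\mb1$, together with boxes avoiding $\mb1$ elsewhere, would also close the gap. At the origin $g$ vanishes to order $12$, and the paper uses the analogous chart $g(tu)=t^{12}g_j(t,u)$. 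Your suggestion to factor out powers of the vanishing coordinates should be organized as such a chart rather than as a box check.
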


\subsection{Normalization}

The construction is invariant under the $S_4$-action relabelling the
four vertices.  For $\beta\ne0$, homogeneity and edge transitivity
allow us to normalize
\begin{equation}\label{eq:app-normalization}
 \beta_{34}=1,
 \qquad 0\le\beta_{12},\beta_{13},\beta_{14},\beta_{23},\beta_{24}\le1.
\end{equation}
Set
\begin{equation}\label{eq:app-g-definition}
 g(\beta_{12},\beta_{13},\beta_{14},\beta_{23},\beta_{24})
 :=f(\beta_{12},\beta_{13},\beta_{14},\beta_{23},\beta_{24},1).
\end{equation}
The exact expansion of $f$ has $6{,}083$ monomials and multidegree
$(6,6,6,6,6,6)$.  The normalized polynomial $g$ also has $6{,}083$
monomials, minimum total degree $12$, maximum total degree $18$, and
multidegree $(6,6,6,6,6)$.  The verifier reconstructs these data from
\eqref{eq:app-source-data}--\eqref{eq:app-scalars}.

\subsection{The finite Bernstein cover}

After an affine change from a box to the unit cube, a polynomial of
coordinate degrees $m_1,\ldots,m_5$ has a tensor-product Bernstein
expansion
\[
 \sum_{0\le\alpha_i\le m_i}b_\alpha
       \prod_{i=1}^5\binom{m_i}{\alpha_i}
                         z_i^{\alpha_i}(1-z_i)^{m_i-\alpha_i}.
\]
All the basis functions are nonnegative on the cube.  Nonnegative
coefficients therefore prove nonnegativity on the box; see
\cite{titi2019matrix}.

Put $a=1/4$, and split each normalized coordinate into $[0,a]$ and
$[a,1]$.  Of the $32$ macro-boxes, four are treated separately:
\begin{equation}\label{eq:app-exceptional-boxes}
 \text{all large},\quad \text{all small},\quad
 \{\beta_{13},\beta_{24}\}\text{ small},\quad
 \{\beta_{14},\beta_{23}\}\text{ small}.
\end{equation}
The other $28$ macro-boxes have nonnegative Bernstein coefficients
directly.  In each opposite-pair box, split the three large
coordinates at $1/2$, giving $8$ subboxes for each pair and $16$ in
all.  Their Bernstein coefficients are also nonnegative.

For the all-small box, re-index the five coordinates by
$1,\ldots,5$, choose an index $j$ of a largest coordinate, and put
\begin{equation}\label{eq:app-origin-chart}
 \beta_j=t,\quad \beta_i=tu_i\ (i\ne j),\qquad
 0\le t\le\tfrac14,\quad 0\le u_i\le1.
\end{equation}
With $u_j=1$ suppressed, the minimum degree gives the factorization
\begin{equation}\label{eq:app-origin-factor}
 g(tu)=t^{12}g_j(t,u).
\end{equation}
Each of the five polynomials $g_j$ has multidegree $(6,6,6,6,6)$
and nonnegative Bernstein coefficients on its chart box.

For the all-large box, put $z_i=1-\beta_i$, choose a largest deviation
$z_j=t$, and write $z_i=tu_i$.  The chart domain is
\begin{equation}\label{eq:app-large-chart}
 0\le t\le\tfrac34,\quad 0\le u_i\le1,\quad u_j=1.
\end{equation}
The translated polynomial $g(1-z)$ has no term of total degree less
than two.  Hence
\begin{equation}\label{eq:app-large-factor}
 g(1-tu)=t^2h_j(t,u).
\end{equation}
Each of the five $h_j$ has multidegree $(16,6,6,6,6)$ and
nonnegative Bernstein coefficients on its chart box.

The complete cover is summarized below.  For multidegree
$(m_1,\ldots,m_5)$ there are $\prod_i(m_i+1)$ coefficients.
Every listed coefficient is checked to be nonnegative.
\begin{center}
\begin{tabular}{@{}lrr@{}}
\toprule
certificate class & rectangles & coefficients per rectangle\\
\midrule
ordinary macro-boxes & $28$ & $7^5$\\
opposite-pair subboxes & $16$ & $7^5$\\
all-small charts & $5$ & $7^5$\\
all-large charts & $5$ & $17\cdot7^4$\\
\midrule
all certificates & $54$ &\\
\bottomrule
\end{tabular}
\end{center}

\subsection{Exact certificate criterion}

\begin{proposition}[finite certificate criterion]
\label{prop:app-finite-certificate}
Suppose the following checks have been made in exact arithmetic:
\begin{enumerate}[label=\textup{(\alph*)},leftmargin=2.4em]
\item the $16$ spanning trees and $29$ normal bases, with their
squared determinants, are reconstructed, and the polynomial $f$ is
formed from \eqref{eq:app-source-data}--\eqref{eq:app-scalars};
\item homogeneity, invariance under all $24$ vertex permutations,
the stated degree data, and the factorizations
\eqref{eq:app-origin-factor} and \eqref{eq:app-large-factor} are verified;
\item every Bernstein coefficient on the $54$ rectangles in the
table is nonnegative.
\end{enumerate}
Then Theorem~\ref{thm:beta-certificate} holds.
\end{proposition}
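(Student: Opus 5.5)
The proof reduces Theorem~\ref{thm:beta-certificate} to a covering argument: the normalized domain must be covered by the $54$ rectangles, and on each piece nonnegativity is transferred from Bernstein coefficients to the polynomial. I would proceed in three steps.

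\emph{Step 1: check (a) and (b) actually give $f$ and reduce to $g$.} Check (a) says the verified polynomial is the $f$ of \eqref{eq:app-f-beta}. This holds because \eqref{eq:app-source-data}--\eqref{eq:app-scalars} are the three-dimensional specialization of \eqref{eq:scalar-basis-sums}, which in turn rests on Cauchy--Binet and Proposition~\ref{prop:tree-cut}. We then reduce to $g$. For $\beta=0$, homogeneity of positive degree $18$ gives $f(0)=0$. For $\beta\ne0$, choose a largest coordinate and apply a vertex permutation carrying that edge to $34$; this is possible because $S_4$ acts transitively on edges. Invariance of $f$, verified in (b), shows $f$ is unchanged. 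Then scale by the positive factor $\beta_{34}^{-1}$, which multiplies $f$ by $\beta_{34}^{-18}>0$. This lands in \eqref{eq:app-normalization}, so it suffices to prove $g\ge0$ on the cube $[0,1]^5$.

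\emph{Step 2: cover the cube.} The $28$ ordinary macro-boxes and the $16$ opposite-pair subboxes together tile all macro-boxes except the all-small box $[0,1/4]^5$ and the all-large box $[1/4,1]^5$. On each of these $44$ rectangles, (c) and nonnegativity of the Bernstein basis give $g\ge0$.

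For the all-small box, take a point $\beta\ne0$ and let $j$ index a largest coordinate. Set $t=\beta_j\in(0,1/4]$ and $u_i=\beta_i/t\in[0,1]$, so the point lies in the chart box \eqref{eq:app-origin-chart}. The factorization \eqref{eq:app-origin-factor} gives $g=t^{12}g_j\ge0$. The point $\beta=0$ also satisfies $g(0)=f(0,\dots,0,1)\ge0$; it is covered by the $t=0$ face of the chart, since $g_j$ is a polynomial and the identity extends by continuity.

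The all-large box is handled the same way. With $z=1-\beta\in[0,3/4]^5$, either $z=0$, which gives the single point $\beta=\mb1$ where $g\ge0$ by continuity from the chart, or $t=\max z_i>0$. In the latter case $u=z/t$ lies in the chart box \eqref{eq:app-large-chart}, and $g=t^2h_j\ge0$.

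\emph{Step 3: conclude.} The rectangles cover $[0,1]^5$, so $g\ge0$ on the whole cube, and by Step~1, $f\ge0$ on $\R_{\ge0}^{\E}$.

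The main obstacle is the bookkeeping in the two chart reductions. The factorizations are polynomial identities, so the factored forms agree with $g$ everywhere on the chart domains, degenerate $t=0$ faces included. The Bernstein expansion of $g_j$ or $h_j$ must be taken on the full chart box, so that every point of the corresponding macro-box is represented by some chart point. I would also state explicitly that the chart maps are surjective onto the all-small and all-large boxes, since this is what makes the covering argument complete.
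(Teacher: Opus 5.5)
Your proposal is correct and follows the paper's proof essentially step for step. The shared steps are: $f(0)=0$; relabel and rescale to the normalization $\beta_{34}=1$; cover the $30$ non-exceptional macro-boxes by the $28$ ordinary boxes and the $16$ opposite-pair subboxes; and cover the all-small and all-large boxes by the five maximal-coordinate and five maximal-deviation charts, restoring the nonnegative factors $t^{12}$ and $t^2$.

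One small point: no continuity argument is needed at $t=0$. The factorizations are polynomial identities, so they give $g(0)=0$ and $g(\mb1)=0$ directly.
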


\begin{proof}
The zero zone vector satisfies $f(0)=0$.  Every other nonnegative
zone vector can be rescaled and relabelled as in
\eqref{eq:app-normalization}.  The $28$ ordinary boxes and the
$16$ opposite-pair subboxes cover all but the all-small and all-large
boxes.  The five maximal-coordinate charts cover the all-small box,
and the five maximal-deviation charts cover the all-large box.
Their polynomial bounds prove $g\ge0$ after restoring the factors
$t^{12}$ and $t^2$, which are nonnegative also at $t=0$.
The cover therefore proves $f\ge0$ everywhere.  No information about
the zero faces of the Bernstein expansions is required.
\end{proof}

\subsection{The verification package}

The package contains
\texttt{verify\_appendix.py} and \texttt{exact\_bernstein.py}.
From its directory, run
\begin{center}
\texttt{python verify\_appendix.py}.
\end{center}
A successful run ends with
\texttt{ALL EXACT BETA-BERNSTEIN CHECKS PASSED}.

The program reconstructs the graph data, the source polynomials,
and every Bernstein coefficient from the formulas above.  No
expanded coefficient table or saved status file is read as a proof
input.  All accepted signs are decided exactly in $\Q(\sqrt3)$.
NumPy is used only to store arrays of Python integers; no
floating-point arithmetic, numerical optimization, interval
arithmetic, or quantifier elimination is used.  File hashes,
dependency versions, coefficient totals, and runtime information
appear in the package's \texttt{README} file.

\begin{proof}[Proof of Theorem~\ref{thm:beta-certificate}]
The verifier checks conditions \textup{(a)}--\textup{(c)} of
Proposition~\ref{prop:app-finite-certificate}.  That proposition
proves the theorem.
\end{proof}

\section*{Acknowledgments}

\subsection*{AI interactions}

A proof of local minimality was obtained entirely by L. Song without any machine assistance and independently of the Cesaroni--Novaga preprint.
After that, the GPT-5.6 Sol model in ChatGPT was used in a substantial way at all stages, including proof discovery and manuscript drafts.
It wrote the Python code, test suites, and code documentation. 
ChatGPT also provided editorial suggestions and feedback on the manuscript.
The authors substantially reworked and simplified the proofs. 
The authors take full responsibility for the correctness of the proof, the computer code, and the supplied certificate.

\subsection*{Human interactions}

L. Song owes a quiet debt to T. Hales for his support, encouragement, and advice throughout the years. L. Song would like to thank Dima Arinkin for teaching the representation theory essential to the symmetry reduction idea. This work is based on an honors thesis of L. Song, who would also like to thank Jason DeBlois, David Kinderlehrer, and Jonathan Rubin for taking the time to be on the thesis defense committee.

\subsection*{Funding}

This work was supported by an undergraduate travel grant from the American Mathematical Society.

\bibliographystyle{alpha}\small
\bibliography{trunc_oct_conj_bibliography}

\end{document}